\documentclass[12pt,reqno]{smfart}
\usepackage{amsmath,amssymb,smfthm,stmaryrd,epic,enumerate}
\usepackage[frenchb]{babel}
\usepackage[latin1]{inputenc}
\usepackage[all]{xy}
\usepackage{a4wide}




\usepackage{mathrsfs}
\usepackage[active]{srcltx}

\usepackage{lmodern}

\usepackage{hyperref}

\NumberTheoremsIn{section}\SwapTheoremNumbers

\begin{document}
\newtheorem{prop-defi}[smfthm]{Proposition-DÈfinition}
\newtheorem{notas}[smfthm]{Notations}
\newtheorem{nota}[smfthm]{Notation}
\newtheorem{defis}[smfthm]{DÈfinitions}
\newtheorem{hypo}[smfthm]{HypothËse}
\newtheorem*{theo*}{ThÈorËme}
\newtheorem*{hyp*}{HypothËses}

\def\Tm{{\mathbb T}}
\def\Um{{\mathbb U}}
\def\Am{{\mathbb A}}
\def\Fm{{\mathbb F}}
\def\Mm{{\mathbb M}}
\def\Nm{{\mathbb N}}
\def\Pm{{\mathbb P}}
\def\Qm{{\mathbb Q}}
\def\Zm{{\mathbb Z}}
\def\Dm{{\mathbb D}}
\def\Cm{{\mathbb C}}
\def\Rm{{\mathbb R}}
\def\Gm{{\mathbb G}}
\def\Lm{{\mathbb L}}
\def\Km{{\mathbb K}}
\def\Om{{\mathbb O}}
\def\Em{{\mathbb E}}
\def\Xm{{\mathbb X}}

\def\BC{{\mathcal B}}
\def\QC{{\mathcal Q}}
\def\TC{{\mathcal T}}
\def\ZC{{\mathcal Z}}
\def\AC{{\mathcal A}}
\def\CC{{\mathcal C}}
\def\DC{{\mathcal D}}
\def\EC{{\mathcal E}}
\def\FC{{\mathcal F}}
\def\GC{{\mathcal G}}
\def\HC{{\mathcal H}}
\def\IC{{\mathcal I}}
\def\JC{{\mathcal J}}
\def\KC{{\mathcal K}}
\def\LC{{\mathcal L}}
\def\MC{{\mathcal M}}
\def\NC{{\mathcal N}}
\def\OC{{\mathcal O}}
\def\PC{{\mathcal P}}
\def\UC{{\mathcal U}}
\def\VC{{\mathcal V}}
\def\XC{{\mathcal X}}
\def\SC{{\mathcal S}}

\def\BF{{\mathfrak B}}
\def\AF{{\mathfrak A}}
\def\GF{{\mathfrak G}}
\def\EF{{\mathfrak E}}
\def\CF{{\mathfrak C}}
\def\DF{{\mathfrak D}}
\def\JF{{\mathfrak J}}
\def\LF{{\mathfrak L}}
\def\MF{{\mathfrak M}}
\def\NF{{\mathfrak N}}
\def\XF{{\mathfrak X}}
\def\UF{{\mathfrak U}}
\def\KF{{\mathfrak K}}
\def\FF{{\mathfrak F}}

\def \longmapright#1{\smash{\mathop{\longrightarrow}\limits^{#1}}}
\def \mapright#1{\smash{\mathop{\rightarrow}\limits^{#1}}}
\def \lexp#1#2{\kern \scriptspace \vphantom{#2}^{#1}\kern-\scriptspace#2}
\def \linf#1#2{\kern \scriptspace \vphantom{#2}_{#1}\kern-\scriptspace#2}
\def \linexp#1#2#3 {\kern \scriptspace{#3}_{#1}^{#2} \kern-\scriptspace #3}

\def \Ext{\mathop{\mathrm{Ext}}\nolimits}
\def \ad{\mathop{\mathrm{ad}}\nolimits}
\def \sh{\mathop{\mathrm{Sh}}\nolimits}
\def \irr{\mathop{\mathrm{Irr}}\nolimits}
\def \FH{\mathop{\mathrm{FH}}\nolimits}
\def \FPH{\mathop{\mathrm{FPH}}\nolimits}
\def \coh{\mathop{\mathrm{Coh}}\nolimits}
\def \res{\mathop{\mathrm{Res}}\nolimits}
\def \op{\mathop{\mathrm{op}}\nolimits}
\def \rec {\mathop{\mathrm{rec}}\nolimits}
\def \art{\mathop{\mathrm{Art}}\nolimits}
\def \vol {\mathop{\mathrm{vol}}\nolimits}
\def \cusp {\mathop{\mathrm{Cusp}}\nolimits}
\def \scusp {\mathop{\mathrm{Scusp}}\nolimits}
\def \Iw {\mathop{\mathrm{Iw}}\nolimits}
\def \JL {\mathop{\mathrm{JL}}\nolimits}
\def \speh {\mathop{\mathrm{Speh}}\nolimits}
\def \isom {\mathop{\mathrm{Isom}}\nolimits}
\def \Vect {\mathop{\mathrm{Vect}}\nolimits}
\def \groth {\mathop{\mathrm{Groth}}\nolimits}
\def \hom {\mathop{\mathrm{Hom}}\nolimits}
\def \deg {\mathop{\mathrm{deg}}\nolimits}
\def \val {\mathop{\mathrm{val}}\nolimits}
\def \det {\mathop{\mathrm{det}}\nolimits}
\def \rep {\mathop{\mathrm{Rep}}\nolimits}
\def \spec {\mathop{\mathrm{Spec}}\nolimits}
\def \fr {\mathop{\mathrm{Fr}}\nolimits}
\def \frob {\mathop{\mathrm{Frob}}\nolimits}
\def \ker {\mathop{\mathrm{Ker}}\nolimits}
\def \im {\mathop{\mathrm{Im}}\nolimits}
\def \Red {\mathop{\mathrm{Red}}\nolimits}
\def \red {\mathop{\mathrm{red}}\nolimits}
\def \aut {\mathop{\mathrm{Aut}}\nolimits}
\def \diag {\mathop{\mathrm{diag}}\nolimits}
\def \spf {\mathop{\mathrm{Spf}}\nolimits}
\def \Def {\mathop{\mathrm{Def}}\nolimits}
\def \twist {\mathop{\mathrm{Twist}}\nolimits}
\def \supp {\mathop{\mathrm{Supp}}\nolimits}
\def \Id {{\mathop{\mathrm{Id}}\nolimits}}
\def \lie {{\mathop{\mathrm{Lie}}\nolimits}}
\def \Ind{\mathop{\mathrm{Ind}}\nolimits}
\def \ind {\mathop{\mathrm{ind}}\nolimits}
\def \bad {\mathop{\mathrm{Bad}}\nolimits}
\def \top {\mathop{\mathrm{Top}}\nolimits}
\def \ker {\mathop{\mathrm{Ker}}\nolimits}
\def \coker {\mathop{\mathrm{Coker}}\nolimits}
\def \gal {{\mathop{\mathrm{Gal}}\nolimits}}
\def \Nr {{\mathop{\mathrm{Nr}}\nolimits}}
\def \rn {{\mathop{\mathrm{rn}}\nolimits}}
\def \tr {{\mathop{\mathrm{Tr~}}\nolimits}}
\def \Sp {{\mathop{\mathrm{Sp}}\nolimits}}
\def \st {{\mathop{\mathrm{St}}\nolimits}}
\def \sp{{\mathop{\mathrm{Sp}}\nolimits}}
\def \perv{\mathop{\mathrm{Perv}}\nolimits}
\def \tor {{\mathop{\mathrm{Tor}}\nolimits}}
\def \gr {{\mathop{\mathrm{gr}}\nolimits}}
\def \nilp {{\mathop{\mathrm{Nilp}}\nolimits}}
\def \obj {{\mathop{\mathrm{Obj}}\nolimits}}
\def \spl {{\mathop{\mathrm{Spl}}\nolimits}}

\def \rem{{\noindent\textit{Remarque.~}}}
\def \rems{{\noindent\textit{Remarques:~}}}
\def \ext {{\mathop{\mathrm{Ext}}\nolimits}}
\def \End {{\mathop{\mathrm{End}}\nolimits}}

\def\semi{\mathrel{>\!\!\!\triangleleft}}
\let \DS=\displaystyle
\def\HT{{\mathop{\mathcal{HT}}\nolimits}}

\def \hi{\HC}
\newcommand*{\tarrow}{\relbar\joinrel\mid\joinrel\twoheadrightarrow}
\newcommand*{\harrow}{\lhook\joinrel\relbar\joinrel\mid\joinrel\rightarrow}
\newcommand*{\rarrow}{\relbar\joinrel\mid\joinrel\rightarrow}
\def \coim {{\mathop{\mathrm{Coim}}\nolimits}}
\def \can {{\mathop{\mathrm{can}}\nolimits}}
\def\LFF{{\mathscr L}}

\setcounter{secnumdepth}{3} \setcounter{tocdepth}{3}

\def \Fil{\mathop{\mathrm{Fil}}\nolimits}
\def \CoFil{\mathop{\mathrm{CoFil}}\nolimits}
\def \Fill{\mathop{\mathrm{Fill}}\nolimits}
\def \CoFill{\mathop{\mathrm{CoFill}}\nolimits}
\def\SF{{\mathfrak S}}
\def\PF{{\mathfrak P}}
\def \EFil{\mathop{\mathrm{EFil}}\nolimits}
\def \EFill{\mathop{\mathrm{EFill}}\nolimits}
\def \FP{\mathop{\mathrm{FP}}\nolimits}

\let \longto=\longrightarrow
\let \oo=\infty

\let \d=\delta
\let \k=\kappa

\newcommand{\marque}{\addtocounter{smfthm}{1}
{\smallskip \noindent \textit{\thesmfthm}~---~}}

\renewcommand\atop[2]{\ensuremath{\genfrac..{0pt}{1}{#1}{#2}}}

\title[Mirabolique stratification de Newton]{Groupe mirabolique, stratification de
Newton raffinÈe et cohomologie des espaces de Lubin-Tate}

\alttitle{Mirabolic group, ramified Newton stratification and cohomology of Lubin-Tate spaces}

\author{Boyer Pascal}
\email{boyer@math.univ-paris13.fr}
\address{UniversitÈ Paris 13, Sorbonne Paris CitÈ \\
LAGA, CNRS, UMR 7539\\ 
F-93430, Villetaneuse (France) \\
PerCoLaTor: ANR-14-CE25}

\thanks{L'auteur remercie l'ANR pour son soutien dans le cadre du projet PerCoLaTor 14-CE25.}

\frontmatter

\begin{abstract}
Dans \cite{boyer-invent2}, on dÈtermine les groupes de cohomologie des espaces de Lubin-Tate 
par voie globale en calculant les fibres des faisceaux de cohomologie du faisceau pervers
des cycles Èvanescents $\Psi$ d'une variÈtÈ de Shimura de type Kottwitz-Harris-Taylor. L'ingrÈdient
le plus complexe consiste ‡ contrÙler les flËches de deux suites spectrales calculant l'une les faisceaux
de cohomologie des faisceaux pervers d'Harris-Taylor, et l'autre ceux de $\Psi$. Dans cet article, 
nous contournons ces difficultÈs en utilisant la thÈorie classique des reprÈsentations du groupe mirabolique ainsi qu'un 
argument gÈomÈtrique simple.

\end{abstract}

\begin{altabstract}
In \cite{boyer-invent2}, we determine the cohomology of Lubin-Tate spaces globally using the comparison
theorem of Berkovich by computing the fibers at supersingular points of the perverse sheaf of vanishing
cycle $\Psi$ of some Shimura variety of Kottwitz-Harris-Taylor type. The most difficult argument deals
with the control of maps of the spectral sequences computing the sheaf cohomology of both
Harris-Taylor perverse sheaves and those of $\Psi$. In this paper, we bypass these difficulties using
the classical theory of representations of the mirabolic group and a simple geometric argument.

\end{altabstract}

\subjclass{11F70, 11F80, 11F85, 11G18, 20C08}

\keywords{VariÈtÈs de Shimura, cohomologie de torsion, idÈal maximal de l'algËbre de Hecke, 
localisation de la cohomologie, reprÈsentation galoisienne}

\altkeywords{Shimura varieties, torsion in the cohomology, maximal ideal of the Hecke algebra,
localized cohomology, galois representation}

\maketitle

\pagestyle{headings} \pagenumbering{arabic}

\tableofcontents
%
%
\renewcommand{\theequation}{\arabic{section}.\arabic{smfthm}}

\section{Introduction}
\renewcommand{\thesmfthm}{\arabic{section}.\arabic{smfthm}}

Le rÈsultat principal de \cite{boyer-invent2} est la dÈtermination de chacun des groupes de cohomologie
‡ coefficients dans $\overline \Qm_l$, de la tour de Lubin-Tate. En utilisant le thÈorËme de comparaison
de Berkovich, la dÈmonstration est de nature globale et consiste ‡ calculer les germes en un point supersingulier
des faisceaux de cohomologie du complexe des cycles Èvanescents $\Psi_\IC$
en une place $v$ d'un corps CM, $F$, d'une tour de variÈtÈs de Shimura de type Kottwitz-Harris-Taylor 
$X_\IC$ indexÈe par l'ensemble $\IC$ de ses niveaux. 
La preuve se dÈroule alors en deux temps relativement distincts:
\begin{itemize}
\item Les auteurs de \cite{h-t} associent ‡
une reprÈsentation irrÈductible $\rho_v$ du groupe des inversibles $D_{v,h}^\times$
de l'algËbre ‡ division centrale sur $F_v$ d'invariant $1/h$, un systËme local $\LC(\rho_v)$, 
dit d'Harris-Taylor, sur la strate de Newton $X^{=h}_{\IC,\bar s_v}$ de la fibre spÈciale en $v$ de $X_\IC$
et calculent la somme alternÈe de leurs groupes de cohomologies ‡ support compact.
Dans \cite{boyer-invent2} on exploite ce calcul en exprimant les images des faisceaux pervers 
$\lexp p j^{=h}_! \LC(\rho_v)[d-h]$ et $\Psi_{\IC}$ dans un groupe de Grothendieck
de faisceaux pervers Èquivariants, en termes d'extensions intermÈdiaires de systËmes locaux
d'Harris-Taylor sur les diffÈrentes strates de Newton. 
Il s'agit de la partie la plus simple de loc. cit., celle qui est contrÙlÈe par la formule des traces.

\item Dans un deuxiËme temps, on Ètudie les suites spectrales associÈes ‡ la filtration par les poids de
ces deux faisceaux pervers pour calculer les faisceaux de cohomologie de 
$\lexp p j^{=h}_{!*} \LC(\rho_v)[d-h]$ et ceux de $\Psi_{\IC}$.
La partie la plus complexe de \cite{boyer-invent2} consiste ‡ contrÙler les flËches de ces suites spectrales
et au final, montrer \og qu'elles sont le moins triviales possibles \fg, au sens o˘ dËs que la source et le
but d'une telle flËche partagent un sous-faisceau Èquivariant,
cette flËche induit un isomorphisme sur ceux-ci. 
\end{itemize}
Pour montrer ce fait, dans \cite{boyer-invent2},
on utilise une propriÈtÈ de compatibilitÈ ‡ l'involution de Zelevinsky de la cohomologie des espaces
de Lubin-Tate. Dans \cite{boyer-compositio}, on propose une autre dÈmonstration
plus simple conceptuellement mais aussi trËs lourde, reposant sur des calculs de groupes de cohomologie 
globaux et en utilisant le thÈorËme de Lefschetz vache. 

Ces deux preuves sont techniquement difficiles et ne permettent pas de rÈellement comprendre 
la raison profonde de la non trivialitÈ de ces flËches. Le but premier de cet article
est ainsi de proposer une nouvelle preuve simple et en un sens, naturelle.
Plus prÈcisÈment ‡ partir de la premiËre Ètape mentionnÈe ci-avant, nous utilisons tout d'abord, 
avec les notations du paragraphe suivant, le fait que l'inclusion 
$$X^{\geq h}_{\IC,\bar s_v,\overline{1_h}} \setminus
X^{\geq h+1}_{\IC,\bar s_v,\overline{1_{h+1}}} \hookrightarrow X^{\geq h}_{\IC,\bar s_v,\overline{1_h}}$$
est affine, cf. le lemme \ref{lem-jaffine}. Ensuite de la thÈorie classique des reprÈsentations induites du 
groupe mirabolique, rappelÈe au \S \ref{para-mira}, on obtient, cf. les suites exactes courtes (\ref{eq-sec0}) 
et (\ref{eq-sec1}), une filtration, ‡ deux crans, des faisceaux pervers d'Harris-Taylor, fournissant une suite 
spectrale qui dÈgÈnËre en $E_1$ et qui correspond au terme $E_2=E_\oo$ de la suite spectrale 
de \cite{boyer-invent2}. Autrement dit la dÈtermination
des flËches dans \cite{boyer-invent2} est entiËrement contrÙlÈe par l'affinitÈ de l'inclusion ci-dessus
et la thÈorie des reprÈsentations du groupe mirabolique.
Le mÍme procÈdÈ appliquÈ au faisceau pervers des cycles Èvanescents fournit de la mÍme faÁon, 
cf. la proposition \ref{prop-psi-fil}, une filtration de celui-ci, dont la suite spectrale calculant ses faisceaux 
de cohomologie dÈgÈnËre en $E_1$, cf. le corollaire \ref{coro-fin}, et coÔncide avec le terme $E_2=E_\oo$ 
de la suite spectrale de \cite{boyer-invent2}.

Je remercie profondÈment J.-F. Dat pour l'intÈrÍt qu'il porte ‡ mon travail, ses questions et ses
suggestions qui ont sÈrieusement contribuÈ ‡ l'amÈlioration du texte.

\section{GÈomÈtrie des variÈtÈs de Shimura unitaires simples}

Soit $F=F^+ E$ un corps CM avec $E/\Qm$ quadratique imaginaire, dont on fixe 
un plongement rÈel $\tau:F^+ \hookrightarrow \Rm$. Pour $v$ une place de $F$, on notera 
\begin{itemize}
\item $F_v$ le complÈtÈ du localisÈ de $F$ en $v$,

\item $\OC_v$ l'anneau des entiers de $F_v$,

\item $\varpi_v$ une uniformisante et

\item $q_v$ le cardinal du corps rÈsiduel $\kappa(v)=\OC_v/(\varpi_v)$.
\end{itemize}

Soit $B$ une algËbre ‡ 
division centrale sur $F$ de dimension $d^2$ telle qu'en toute place $x$ de $F$,
$B_x$ est soit dÈcomposÈe soit une algËbre ‡ division et on suppose $B$ 
munie d'une involution de
seconde espËce $*$ telle que $*_{|F}$ est la conjugaison complexe $c$. Pour
$\beta \in B^{*=-1}$, on note $\sharp_\beta$ l'involution $x \mapsto x^{\sharp_\beta}=\beta x^*
\beta^{-1}$ et $G/\Qm$ le groupe de similitudes dÈfini pour toute $\Qm$-algËbre $R$ par 
$$
G(R)  \simeq   \{ (\lambda,g) \in R^\times \times (B^{op} \otimes_\Qm R)^\times  \hbox{ tel que } 
gg^{\sharp_\beta}=\lambda \}
$$
avec $B^{op}=B \otimes_{F,c} F$. 
Si $x$ est une place de $\Qm$ dÈcomposÈe $x=yy^c$ dans $E$ alors 
\addtocounter{smfthm}{1}
\begin{equation} \label{eq-facteur-v}
G(\Qm_x) \simeq (B_y^{op})^\times \times \Qm_x^\times \simeq \Qm_x^\times \times
\prod_{z_i} (B_{z_i}^{op})^\times,
\end{equation}
o˘, en identifiant les places de $F^+$ au dessus de $x$ avec les places de $F$ au dessus de $y$,
$x=\prod_i z_i$ dans $F^+$.

Dans \cite{h-t}, les auteurs justifient l'existence d'un $G$ comme ci-dessus tel qu'en outre:
\begin{itemize}
\item si $x$ est une place de $\Qm$ qui n'est pas dÈcomposÈe dans $E$ alors
$G(\Qm_x)$ est quasi-dÈployÈ;

\item les invariants de $G(\Rm)$ sont $(1,d-1)$ pour le plongement $\tau$ et $(0,d)$ pour les
autres. 
\end{itemize}

\begin{nota} 
On fixe un nombre premier $l$ non ramifiÈ dans $E$ et
on note $\spl$ l'ensemble des places $v$ de $F$ telles que $p_v:=v_{|\Qm} \neq l$
est dÈcomposÈ dans $F$ et $B_v^\times \simeq GL_d(F_v)$.
\end{nota}

Rappelons, cf. \cite{h-t} bas de la page 90, qu'un sous-groupe ouvert compact de $G(\Am^\oo)$
est dit \og assez petit \fg{} s'il existe une place $x$ pour laquelle la projection de $U^v$ 
sur $G(\Qm_x)$ ne contienne aucun ÈlÈment d'ordre fini autre que l'identitÈ.

\begin{nota}
Soit $\IC$ l'ensemble des sous-groupes compacts ouverts \og assez petits \fg{} de $G(\Am^\oo)$.
Pour $I \in \IC$, on note $X_{I,\eta} \longrightarrow \spec F$ la variÈtÈ de Shimura associÈe, dit
de Kottwitz-Harris-Taylor.
\end{nota}

\rem Pour tout $v \in \spl$, la variÈtÈ $X_{I,\eta}$ admet un modËle projectif $X_{I,v}$ sur $\spec \OC_v$
de fibre spÈciale $X_{I,s_v}$. Pour $I$ dÈcrivant $\IC$, le systËme projectif $(X_{I,v})_{I\in \IC}$ 
est naturellement muni d'une action de $G(\Am^\oo) \times \Zm$  telle que l'action d'un ÈlÈment
$w_v$ du groupe de Weil $W_v$ de $F_v$ est donnÈe par celle de $-\deg (w_v) \in \Zm$,
o˘ $\deg=\val \circ \art^{-1}$ o˘ $\art^{-1}:W_v^{ab} \simeq F_v^\times$ est
l'isomorphisme d'Artin qui envoie les Frobenius gÈomÈtriques sur les uniformisantes.

\begin{notas} (cf. \cite{boyer-invent2} \S 1.3) \label{nota-jh0}
Pour $I \in \IC$, la fibre spÈciale gÈomÈtrique $X_{I,\bar s_v}$ admet une stratification de Newton
$$X_{I,\bar s_v}=:X^{\geq 1}_{I,\bar s_v} \supset X^{\geq 2}_{I,\bar s_v} \supset \cdots \supset 
X^{\geq d}_{I,\bar s_v}$$
o˘ $X^{=h}_{I,\bar s_v}:=X^{\geq h}_{I,\bar s_v} - X^{\geq h+1}_{I,\bar s_v}$ est un schÈma 
affine, lisse de pure dimension $d-h$ formÈ des points gÈomÈtriques dont la partie connexe du groupe de 
Barsotti-Tate est de rang $h$.
Pour tout $1 \leq h <d$, nous utiliserons les notations suivantes:
$$i^{h}:X^{\geq h}_{I,\bar s_v} \hookrightarrow X_{I,\bar s_v}, \quad
j^{\geq h}: X^{=h}_{I,\bar s_v} \hookrightarrow X^{\geq h}_{I,\bar s_v},$$
ainsi que $j^{=h}=i^h \circ j^{\geq h}$.
\end{notas}

Soit $\GC(h)$ le groupe de Barsotti-Tate universel sur $X_{I,\bar s,\overline{1_h}}^{=h}$:
$$0 \rightarrow \GC(h)^c \longrightarrow \GC(h) \longrightarrow \GC(h)^{et} \rightarrow 0$$
o˘ $\GC(h)^c$ (resp. $\GC(h)^{et}$) est connexe (resp. Ètale) de dimension $h$ (resp. $d-h$).
Notons 
$\iota_{m_1}:(\PC_v^{-m_1}/\OC_v)^d \longrightarrow \GC(h)[p^{m_1}]$
la structure de niveau universelle. Notons alors $(e_i(m_1))_{1 \leq i \leq d}$ la base canonique de
$(\PC_v^{-m_1}/\OC_v)^d$.

\begin{defi} \label{defi-strate1}
On introduit le sous-schÈma fermÈ 
$X_{I,\bar s,\overline{1_h}}^{=h}$ de $X_{I,\bar s_v}^{=h}$ dÈfini par la propriÈtÈ
que $\bigl \{ \iota_{m_1}(e_i(m_1)):~1 \leq i \leq h \bigr \}$ forme une base de Drinfeld de 
$\GC(h)^c[p^{m_1}]$.

Plus gÈnÈralement Ètant donnÈ $a \in GL_d(\OC_v)/P_{h,d-h}(\OC_v)$ que l'on identifiera avec
le sous-espace vectoriel $\langle a(e_1),\cdots,a(e_h) \rangle$ engendrÈ par les images par $a$ des
$h$ premiers vecteurs $e_1,\cdots, e_h$ de la base canonique de $\OC_v^d$, on dÈfinit la strate 
\emph{dite pure}\footnote{en comparaison des strates de la dÈfinition \ref{defi-strate2}} 
$X_{I,\bar s,a}^{=h}$ comme le sous-schÈma fermÈ tel que
$\bigl \{ \iota_{m_1}(a(e_i(m_1)):~1 \leq i \leq h \}$ forme une base de Drinfeld de 
$\GC(h)^c[p^{m_1}]$, o˘ $e_i(m_1)$ est l'image de $e_i$ modulo $\PC_v^{m_1}$.
\end{defi}

\rem L'action de $GL_d(F_v)$ sur $X^{=h}_{I,\bar s_v}$ donne la dÈcomposition
\addtocounter{smfthm}{1}
\begin{equation} \label{eq-strate-induite}
X_{I,\bar s_v}^{=h} \simeq X_{I,\bar s_v,\overline{1_h}}^{=h} 
\times_{P_{h,d}(\OC_v/(\varpi_v^n))} GL_d(\OC_v/(\varpi_v^n)),
\end{equation}
au sens o˘ la strate $X_{I,\bar s_v,a}^{=h}$ s'obtient comme l'image
par $a$ de $X_{I,\bar s,\overline{1_h}}^{=h}$.

%
%

\begin{nota} \label{nota-jh1}
On note $X_{I,\bar s_v,\overline{1_h}}^{\geq h}$ l'adhÈrence de 
$X_{I,\bar s_v,\overline{1_h}}^{=h}$ dans $X_{I,\bar s_v}^{\geq h}$ et 
$$j^{\geq h}_{\overline{1_h}}: X_{I,\bar s_v,\overline{1_h}}^{=h} \hookrightarrow 
X_{I,\bar s_v,\overline{1_h}}^{\geq h},$$
ainsi que $j^{= h}_{\overline{1_h}}:=i^h_{\overline{1_h}} \circ j^{\geq h}_{\overline{1_h}}$
o˘ $i^h_{\overline{1_h}}:X_{I,\bar s_v,\overline{1_h}}^{\geq h} \hookrightarrow X_{I,\bar s_v}$.
\end{nota}

\section{Faisceaux pervers d'Harris-Taylor et des cycles Èvanescents}

Fixons une place $v \in \spl$. En utilisant un analogue des classiques variÈtÈs d'Igusa,
les auteurs de \cite{h-t} associent ‡ toute reprÈsentation $\rho_v$ de l'ordre maximal $\DC_{v,h}^\times$
de $D_{v,h}^\times$, un systËme local $\LC(\rho_v)_{\overline{1_h}}$ 
sur $X^{=h}_{\IC,\bar s_v,\overline{1_h}}$ 
muni d'une action Èquivariante de $G(\Am^{\oo,p}) \times P_{h,d}(F_v) \times \Zm$ telle que 
\begin{itemize}
\item le sous-groupe unipotent de $P_{h,d}(F_v)$ agit trivialement et

\item l'action du facteur $GL_h(F_v)$ de son Levi agit via 
$\val\circ \det: GL_h(F_v) \twoheadrightarrow \Zm$. 
\end{itemize}
On introduit alors la version induite de ces systËmes locaux
$$\LC(\rho_v):= \LC(\rho_v)_{\overline{1_h}} \times_{P_{h,d}(F_v)} GL_d(F_v),$$
muni donc d'une action Èquivariante de $G(\Am^{\oo,p}) \times GL_d(F_v) \times \Zm$.

\begin{nota}
La correspondance de Jacquet-Langlands permet de paramÈtrer les $\overline \Qm_l$-reprÈsentations
irrÈductibles de $D_{v,h}^\times$ ‡ l'aide des reprÈsentations irrÈductibles cuspidales
$\pi_v$ de $GL_g(F_v)$ pour $g$ un diviseur de $h=tg$, on Ècrit une telle reprÈsentation sous la forme
$\pi_v[t]_D$.
\end{nota}

\begin{nota} \label{nota-HT} 
Pour $\pi_v$ une reprÈsentation irrÈductible cuspidale de $GL_g(F_v) $et $\Pi_t$ une reprÈsentation de 
$GL_{tg}(F_v)$, on note\footnote{Lorsque $n=0$, on le fera disparaitre des notations.}, cf. la premiËre remarque
de 2.1.3 de \cite{boyer-invent2}, 
$$\widetilde{HT_{\overline{1_{tg}}}}(\pi_v,\Pi_t)(n):=\LC(\pi_v[t]_D)_{\overline{1_{tg}}} 
\otimes \Pi_t \otimes \Xi^{\frac{tg-d}{2}-n}$$
le systËme local d'Harris-Taylor associÈ o˘ 
$$\Xi:\frac{1}{2} \Zm \longrightarrow \overline \Zm_l^\times$$ 
est dÈfinie par $\Xi(\frac{1}{2})=q^{1/2}$ et
\begin{itemize}
\item $GL_{tg}(F_v)$ agit diagonalement sur $\Pi_t$ et sur $\LC(\pi_v[t]_D)_{\overline{1_{tg}}} 
\otimes \Xi^{\frac{tg-d}{2}-n}$ via son quotient $GL_{tg}(F_v) \twoheadrightarrow \Zm$,

\item le groupe de Weil $W_v$ en $v$ agit sur le facteur
$\Xi^{\frac{tg-d}{2}-n}$ via l'application $\deg: W_v \twoheadrightarrow \Zm$ qui 
envoie les frobenius gÈomÈtriques sur $1$.
\end{itemize}
On note aussi
$$HT_{\overline{1_{tg}}}(\pi_v,\Pi_t):=\widetilde{HT}_{\overline{1_{tg}}}(\pi_v,\Pi_t)[d-tg]$$
et
$$P(t,\pi_v)_{\overline{1_{tg}}}:=j^{=tg}_{\overline{1_{tg}},!*} HT_{\overline{1_{tg}}}
(\pi_v,\st_t(\pi_v)) \otimes \Lm(\pi_v)$$
le faisceau pervers d'Harris-Taylor associÈ o˘ $\Lm^\vee$ est la correspondance Langlands sur $F_v$.
En ce qui concerne les versions induites on les notera sans l'indice $\overline{1_{tg}}$, i.e.
$HT(\pi_v,\Pi_t)$ et $P(t,\pi_v)$.
\end{nota}

\rem On rappelle que $\pi'_v$ est inertiellement Èquivalente ‡ $\pi_v$ si et seulement
s'il existe un caractËre $\zeta: \Zm \longrightarrow  \overline \Qm_l^\times$ tel que 
$$\pi'_v \simeq \pi_v \otimes (\zeta \circ \val \circ \det).$$ 
Les faisceau pervers $P(t,\pi_v)$ 
ne dÈpend que de la classe d'Èquivalence inertielle de $\pi_v$ i.e. pour tout
$\chi:\Zm \longrightarrow \overline \Qm_l^\times$, 
les faisceaux pervers $P(t,\pi_v)$ et $P(t,\pi_v \otimes \chi)$,
munis de leurs actions par correspondances, sont isomorphes. En particulier, en notant 
$e_{\pi_v}$ la longueur de la restriction de $\pi_v[1]_D$ ‡ $\DC_{v,g}^\times$, on a
$$P(t,\pi_v)=e_{\pi_v} \PC(t,\pi_v)$$
o˘ $\PC(t,\pi_v)$ est un faisceau pervers Èquivariant simple. De la mÍme faÁon, il existe un
systËme local $\HT(\pi_v,\Pi_t)$ tel que $HT(\pi_v,\Pi_t)=e_{\pi_v} \HT(\pi_v,\Pi_t)$.

\begin{defi}
Pour tout $I \in \IC$, le faisceaux pervers des cycles Èvanescents
$R\Psi_{\eta_v,I}(\overline \Qm_l[d-1])(\frac{d-1}{2})$ sur $X_{I,\bar s_v}$ sera notÈ $\Psi_{I}$.
Le faisceau pervers de Hecke associÈ sur $X_{\IC,\bar s_v}$ est notÈ
$\Psi_{\IC}$.
\end{defi}
%

Dans \cite{boyer-invent2}, on dÈcompose dans le groupe de Grothendieck des faisceaux pervers
Èquivariants sur $X_{\IC,\bar s_v}$ relativement ‡ l'action de Hecke et du groupe de Weil en $v$,
le faisceau pervers $\Psi_{\IC}$ en termes des faisceaux pervers d'Harris-Taylor
$\PC(t,\pi_v)(n)$.

\begin{nota} Pour $K^\bullet$ un complexe de faisceaux, on notera $\hi^i K^\bullet$ son $i$-Ëme
faisceau de cohomologie.
\end{nota}

\section{Deux reprÈsentations induites du groupe mirabolique}
\label{para-mira}

Le nouvel argument pour calculer les faisceaux de cohomologie des faisceaux pervers d'Harris-Taylor 
et du complexe des cycles Èvanescents, repose sur deux lemmes, le premier 
\ref{lem-mirabolique} de thÈorie
des reprÈsentations du groupe mirabolique et le deuxiËme \ref{lem-jaffine} de nature gÈomÈtrique.

\begin{defi}
Soit $P=MN$ un parabolique standard de $GL_n$ de LÈvi $M$ et de radical unipotent $N$.
On note $\delta_P:P(F_v) \rightarrow \overline \Qm_l^\times$ l'application dÈfinie par
$\delta_P(h)=|\det (\ad(h)_{|\lie N})|^{-1}.$
Pour $(\pi_1,V_1)$ et $(\pi_2,V_2)$ des reprÈsentations de respectivement $GL_{n_1}(F_v)$ 
et $GL_{n_2}(F_v)$, et $P_{n_1,n_1+n_2}$ le parabolique standard de $GL_{n_1+n_2}$ de Levi 
$M=GL_{n_1} \times GL_{n_2}$ et de radical unipotent $N$, 
$$\pi_1 \times \pi_2$$
dÈsigne l'induite parabolique normalisÈe de $P_{n_1,n_1+n_2}(F_v)$ ‡ $GL_{n_1+n_2}(F_v)$ de 
$\pi_1 \otimes \pi_2$ c'est ‡ dire
l'espace des fonctions $f:GL_{n_1+n_2}(F_v) \rightarrow V_1 \otimes V_2$ telles que
$$f(nmg)=\delta_{P_{n_1,n_1+n_2}}^{-1/2}(m) (\pi_1 \otimes \pi_2)(m) \Bigl ( f(g) \Bigr ),
\quad \forall n \in N, ~\forall m \in M(F_v), ~ \forall g \in GL_{n_1+n_2}(F_v),$$
ou encore en terme d'induite classique
\addtocounter{smfthm}{1}
\begin{equation} \label{eq-torsion}
\pi_1 \times \pi_2 \simeq \ind_{P_{n_1,n_1+n_2}(F_v)}^{GL_{n_1+n_2}(F_v)} \pi_1\{\frac{n_2}{2} \}
\otimes \pi_2 \{ - \frac{n_1}{2} \}.
\end{equation}
\end{defi}

\rem Rappelons qu'une reprÈsentation $\pi$ de $GL_n(F_v)$ est dite \textit{cuspidale} si elle n'est pas 
un sous-quotient d'une induite parabolique propre. 

\begin{notas} (cf. \cite{zelevinski2} \S 2)
Soient $g$ un diviseur de $d=sg$ et $\pi_v$ une reprÈsentation cuspidale
irrÈductible de $GL_g(F_v)$. 
\begin{itemize}
\item L'unique quotient (resp. sous-reprÈsentation) irrÈductible de
$$\pi_v\{ \frac{1-s}{2} \} \times \pi_v\{\frac{3-s}{2} \} \times \cdots \times \pi_v \{ \frac{s-1}{2} \}$$
est notÈ $\st_s(\pi_v)$ (resp. $\speh_s(\pi_v)$).

\item L'unique sous-espace irrÈductible de 
$$\st_t(\pi_v \{ \frac{-s}{2} \} ) \times \speh_s (\pi_v \{ \frac{t}{2} \} )$$ 
est notÈ $LT_{\pi_v}(t-1,s)$.
\end{itemize}
\end{notas}

Notons $M_n(F_v)$ le sous-groupe mirabolique de $GL_n(F_v)$
dÈfini comme  le sous-groupe de $P_{1,n}(F_v)$ dont le premier coefficient en haut ‡ gauche
est Ègal ‡ $1$. Suivant \cite{zelevinski1}
une reprÈsentation irrÈductible $\tau$ de $M_n(F)$ admet une unique dÈrivÈe non nulle, disons d'ordre $k$, 
qui est alors une reprÈsentation irrÈductible $\tau^{(k)}$ de $GL_{n-k}(F_v)$: on peut en outre
reconstituer $\tau$ ‡ partir de cette dÈrivÈe.

\rem Dans \cite{zelevinski1}, les auteurs utilisent plutÙt le groupe obtenu ‡ partir de $M_n(F_v)$
en tordant les actions par $g \mapsto \sigma (\lexp t g^{-1} ) \sigma^{-1}$, 
o˘ $\sigma$ est la matrice de permutation associÈ au cycle $(12\cdots n)$. 
Le calcul des dÈrivÈes de \cite{zelevinski2} theorem 3.5, proposition 9.6 est alors inversÈ, cf. 
les exemples ci-aprËs.

Pour Ètudier les reprÈsentations de $GL_n(F)$
et notamment dÈcrire les sous-quotients irrÈductibles des induites paraboliques comme ci-avant,
le principe est alors de les restreindre au groupe mirabolique et de regarder leurs dÈrivÈes.
On a en particulier les calculs de dÈrivÈes suivant.
\begin{itemize}
\item On a une formule de Leibnitz: $(\pi_1 \times \pi_2)^{(k)}$ admet une filtration dont les graduÈs
sont les $(\pi_1)^{(i)} \times (\pi_2)^{(k-i)}$ pour $i=0,\cdots,k$. 

\item $\speh_s(\pi_v)$ possËde une unique dÈrivÈe non nulle d'ordre $g$ isomorphe ‡
$\speh_{s-1}(\pi_v \{ \frac{1}{2} \})$.

\item Les dÈrivÈes d'ordre $k$ de $\st_s(\pi_v)$ sont nulles si $k$ n'est pas de la forme $tg$ pour
$1 \leq t \leq s$. Pour $k=tg$ avec $1 \leq t < s$, cette dÈrivÈe est
isomorphe ‡ $\st_{s-t}(\pi_v \{ -\frac{t}{2} \})$.

\item Les dÈrivÈes d'ordre $k$ de $LT_{\pi_v}(t-1,s)$ sont nulles si $k$ n'est pas de la forme $\delta g$
pour $1 \leq k \leq t$. Pour un tel $k=\delta g$, elle est isomorphe ‡ la reprÈsentation induite irrÈductible
$\st_{t-\delta}(\pi_v \{ -\frac{\delta+s}{2} \} ) \times \speh_s(\pi_v \{ \frac{t}{2} \} )$.
%
\end{itemize}

\rem Pour $LT_{\pi_v}(t,s-1)$, on trouve 
$$LT_{\pi_v}(t,s-1)^{(\delta g)} = \
\st_{t+1-\delta}(\pi_v \{ -\frac{\delta+s-1}{2} \} ) \times \speh_{s-1}(\pi_v \{ \frac{t+1}{2} \} ).$$
%
%
Le support cuspidal de $LT_{\pi_v}(t-1,s)^{(\delta g)}$ est ainsi
\begin{multline*}
\pi_v \{ \frac{1-s-t}{2} \}, \pi_v \{ \frac{3-s-t}{2} \}, \cdots, \pi_v \{ \frac{-1-s+t-2 \delta}{2} \}, \\
\pi_v \{ \frac{1-s+t}{2} \}, \pi_v \{ \frac{3-s+t}{2} \},\cdots, \pi_v \{ \frac{s+t-1}{2} \},
\end{multline*}
alors que celui de $LT_{\pi_v}(t,s-1)^{(\delta g)}$ est
\begin{multline*}
\pi_v \{ \frac{1-s-t}{2} \}, \pi_v \{ \frac{3-s-t}{2} \}, \cdots, \pi_v \{ \frac{1-s+t-2 \delta}{2} \}, \\
\pi_v \{ \frac{3-s+t}{2} \}, \pi_v \{ \frac{5-s+t}{2} \},\cdots, \pi_v \{ \frac{s+t-1}{2} \}.
\end{multline*}
On notera en particulier que ces supports sont distincts et que donc $LT_{\pi_v}(t-1,s)$
et  $LT_{\pi_v}(t,s-1)$ ont des dÈrivÈes irrÈductibles distinctes ou nulles.

\begin{lemm} \label{lem-mirabolique}
Soit $\pi_v$ une reprÈsentation irrÈductible cuspidale de $GL_g(F_v)$, alors en tant que reprÈsentation
du parabolique $P_{1,(t+s)g}(F_v)$, on a des isomorphismes
$$\st_t(\pi_v\{ -\frac{s}{2} \} )_{|M_{tg}(F_v)} \times \speh_s (\pi_v \{ \frac{t}{2} \} ) \simeq
LT_{\pi_v} (t-1,s)_{|M_{(t+s)g}(F_v)},$$
et
$$\st_t(\pi_v\{ -\frac{s}{2} \} ) \times \speh_s (\pi_v \{ \frac{t}{2} \} )_{|M_{sg}(F_v)} \simeq 
LT_{\pi_v} (t,s-1)_{|M_{(t+s)g}(F_v)},$$
o˘ dans le premier isomorphisme, l'induite parabolique est relativement ‡ 
$$\left ( \begin{array}{cc} M_{tg} & U \\ 0 & GL_{sg} \end{array} \right )$$
alors que dans le deuxiËme, il s'agit de l'induite ‡ support compact relativement ‡
$$\left ( \begin{array}{ccc} 1 & 0 & V_{sg-1} \\ 0 & GL_{tg} & U \\ 0 & 0 & GL_{sg-1} \end{array} \right ),$$
o˘ on note $V_{n-1}(F_v) \simeq F_v^{n-1}$ le radical unipotent de $M_n(F_v)$.
\end{lemm}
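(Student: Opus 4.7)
The plan is to deduce both isomorphisms from a single application of the Bernstein-Zelevinsky geometric lemma to the restriction of $\st_t(\pi_v\{-s/2\}) \times \speh_s(\pi_v\{t/2\})$ to the mirabolic $P_{1,(t+s)g}(F_v)$, and then to match the resulting two-step filtration with the socle-cosocle filtration of this induced representation.

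First I would describe the $P_{1,(t+s)g}(F_v)$-orbits on the partial flag variety $GL_{(t+s)g}(F_v)/P_{tg,(t+s)g}(F_v)$: there are exactly two, the closed orbit of $tg$-dimensional subspaces containing the line fixed by the mirabolic, whose stabilizer in $P_{1,(t+s)g}(F_v)$ is exactly the subgroup $\bigl(\begin{smallmatrix} M_{tg} & U \\ 0 & GL_{sg}\end{smallmatrix}\bigr)$, and the open orbit of the remaining subspaces, whose stabilizer is $\bigl(\begin{smallmatrix} 1 & 0 & V_{sg-1} \\ 0 & GL_{tg} & U \\ 0 & 0 & GL_{sg-1}\end{smallmatrix}\bigr)$. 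The geometric lemma then yields a short exact sequence of $P_{1,(t+s)g}(F_v)$-representations
$$0 \to \AC \to \bigl(\st_t(\pi_v\{-s/2\}) \times \speh_s(\pi_v\{t/2\})\bigr)_{|P_{1,(t+s)g}} \to \BC \to 0,$$
where $\AC$ is the normalized parabolic induction of $\st_t(\pi_v\{-s/2\})_{|M_{tg}} \otimes \speh_s(\pi_v\{t/2\})$ from the closed-orbit stabilizer, and $\BC$ is the compactly supported induction of $\st_t(\pi_v\{-s/2\}) \otimes \speh_s(\pi_v\{t/2\})_{|M_{sg-1}}$ from the open-orbit stabilizer. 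A direct check with the modular characters $\delta^{1/2}$ on each stabilizer shows that these intrinsic normalizations reproduce exactly the twists $\{-s/2\}$ and $\{t/2\}$ appearing in the statement, so $\AC$ and $\BC$ are precisely the two left-hand sides of the lemma.

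To identify $\AC$ and $\BC$ with restrictions of $LT$-representations, I would invoke two classical inputs. First, since the segments underlying $\st_t(\pi_v\{-s/2\})$ and $\speh_s(\pi_v\{t/2\})$ are linked in the sense of Zelevinsky, the induction $\st_t(\pi_v\{-s/2\}) \times \speh_s(\pi_v\{t/2\})$ has composition length $2$, with socle $LT_{\pi_v}(t-1,s)$ by the very definition and cosocle $LT_{\pi_v}(t,s-1)$; the cosocle identification can be seen by applying the Aubert-Zelevinsky involution (which swaps $\st$ and $\speh$) to the socle statement. Second, by a classical theorem of Bernstein-Zelevinsky, the restriction to the mirabolic of an irreducible smooth $GL_n(F_v)$-representation remains irreducible. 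Combining these two inputs, the restriction $\bigl(\st_t(\pi_v\{-s/2\}) \times \speh_s(\pi_v\{t/2\})\bigr)_{|P_{1,(t+s)g}}$ has composition length $2$, with irreducible constituents $LT_{\pi_v}(t-1,s)_{|P_{1,(t+s)g}}$ as the socle and $LT_{\pi_v}(t,s-1)_{|P_{1,(t+s)g}}$ as the cosocle.

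Since $\AC$ and $\BC$ are both visibly nonzero and their composition lengths sum to $2$, each must be irreducible; as $\AC$ is a subrepresentation it has to coincide with the socle, yielding the first isomorphism, and $\BC$ being a quotient has to coincide with the cosocle, yielding the second. The main obstacle I anticipate is the verification of the cosocle identification for $\st_t(\pi_v\{-s/2\}) \times \speh_s(\pi_v\{t/2\})$ together with the careful $\delta^{1/2}$ bookkeeping in the geometric lemma, so that no extraneous character twist appears when matching $\AC$ and $\BC$ with the prescribed $LT$ restrictions.
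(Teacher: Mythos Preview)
Your orbit decomposition of $GL_{(t+s)g}/P_{tg,(t+s)g}$ under $P_{1,(t+s)g}$ and the resulting short exact sequence is exactly the one the paper writes down (there for the mirabolic $M_{(t+s)g}$, quoting \cite{vigneras-livre} III.1.10, and then upgraded to $P_{1,(t+s)g}$ at the very end). The divergence is in how $\AC$ and $\BC$ are pinned down.

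There is a genuine gap in your identification step. From Bernstein's irreducibility theorem you correctly get that $I_{|P}:=\bigl(\st_t(\pi_v\{-s/2\})\times\speh_s(\pi_v\{t/2\})\bigr)_{|P_{1,(t+s)g}}$ has length two with Jordan--H\"older factors $LT_{\pi_v}(t-1,s)_{|P}$ and $LT_{\pi_v}(t,s-1)_{|P}$, and that $\AC$ is an irreducible subrepresentation. But your assertion that $LT_{\pi_v}(t-1,s)_{|P}$ is \emph{the} socle of $I_{|P}$, rather than merely a subrepresentation, tacitly assumes that the non-split $GL_{(t+s)g}$-extension remains non-split after restriction to $P_{1,(t+s)g}$. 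Nothing in Bernstein's theorem prevents $I_{|P}$ from being semisimple; if it were, both irreducible factors would sit in the socle and ``$\AC$ is a sub'' would not decide whether $\AC\cong LT_{\pi_v}(t-1,s)_{|P}$ or $\AC\cong LT_{\pi_v}(t,s-1)_{|P}$.

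The paper closes this by computing derivatives: from the formula $\bigl(\sigma_{|M}\times\tau\bigr)^{(k)}=\sigma^{(k)}\times\tau$ and the known derivatives of $\st_t(\pi_v)$, it identifies $\AC$ as an $M_{(t+s)g}$-representation by induction on $t$. In your framework the cheapest repair is a single highest-derivative comparison: one sees directly that $\AC^{(k)}=0$ for all $k>tg$, whereas $LT_{\pi_v}(t,s-1)$ has its highest derivative at level $(t+1)g$ (its Zelevinsky multisegment has $t+1$ segments), so $\AC\not\cong LT_{\pi_v}(t,s-1)_{|P}$ and therefore $\AC\cong LT_{\pi_v}(t-1,s)_{|P}$. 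Once the first isomorphism is secured, your Jordan--H\"older argument does give the second one cleanly.
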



\begin{proof} 
On rappelle cf. par exemple \cite{vigneras-livre} \S III.1.10, que
\begin{multline*}
0 \rightarrow 
\st_t(\pi_v\{ -\frac{s}{2} \} ) \times \speh_s (\pi_v \{ \frac{t}{2} \} )_{|M_{sg}(F_v)} \\
\longrightarrow
\Bigl ( \st_t(\pi_v\{ -\frac{s}{2} \} )  \times \speh_s (\pi_v \{ \frac{t}{2} \} ) \Bigr )_{|M_{(t+s)g}(F_v)} \\
\longrightarrow
 \st_t(\pi_v\{ -\frac{s}{2} \} )_{|M_{tg}(F_v)} \times \speh_s (\pi_v \{ \frac{t}{2} \} )
\rightarrow 0,
\end{multline*}
o˘ le deuxiËme terme est l'induite parabolique relativement ‡ 
$\left ( \begin{array}{cc} M_{tg} & U \\ 0 & GL_{sg} \end{array} \right )$
et le premier, l'induite ‡ support compact relativement ‡
$\left ( \begin{array}{ccc} 1 & 0 & V_{sg-1} \\ 0 & GL_{tg} & U \\ 0 & 0 & GL_{sg-1} \end{array} \right )$.
En outre pour tout $k \geq 0$, la dÈrivÈe d'ordre $k$ de 
$\st_t(\pi_v\{ -\frac{s}{2} \} )_{|M_{tg}(F_v)} \times \speh_s (\pi_v \{ \frac{t}{2} \} )$ est, cf. 
\cite{vigneras-livre} p153, donnÈe par celle de 
$\st_t(\pi_v \{ -\frac{s}{2} \} )_{|M_{tg}(F_v)}$ induite avec $\speh_s (\pi_v \{ \frac{t}{2} \} )$, i.e.
$$
\Bigl ( \st_t(\pi_v \{ -\frac{s}{2} \} )_{|M_{tg}(F_v)} \times \speh_s (\pi_v \{ \frac{t}{2} \} ) \Bigr )^{(k)} 
\simeq \Bigl ( \st_t(\pi_v \{ -\frac{s}{2} \})_{|M_{tg}(F_v)} \Bigr )^{(k)} \times 
\speh_s (\pi _v\{ \frac{t}{2} \} ).
$$
Ainsi on remarque que
$\st_t(\pi_v\{ -\frac{s}{2} \} )_{|M_{tg}(F_v)} \times \speh_s (\pi_v \{ \frac{t}{2} \} )$ et
$LT_{\pi_v}(t-1,s)_{|M_{(t+s)g}(F_v)} $ ont les mÍmes dÈrivÈes.
ConsidÈrons alors le morphisme composÈ
\begin{multline*}
LT_{\pi_v}(t-1,s)_{|M_{(t+s)g}(F_v)} \hookrightarrow 
\Bigl ( \st_t(\pi_v\{ -\frac{s}{2} \} )  \times \speh_s (\pi_v \{ \frac{t}{2} \} ) \Bigr )_{|M_{(t+s)g}(F_v)} \\
\twoheadrightarrow 
\st_t(\pi_v\{ -\frac{s}{2} \} )_{|M_{tg}(F_v)} \times \speh_s (\pi_v \{ \frac{t}{2} \} )
\end{multline*}
et notons 
$K \hookrightarrow \st_t(\pi_v \{ -\frac{s}{2} \} ) \times \speh_s (\pi_v \{ \frac{t}{2} \} )_{|M_{sg}(F_v)}$
son noyau. Par soustraction, les dÈrivÈes de $\st_t(\pi_v \{ -\frac{s}{2} \} ) 
\times \speh_s (\pi_v \{ \frac{t}{2} \} )_{|M_{sg}(F_v)}$
sont les mÍmes que celles de $LT_{\pi_v}(t,s-1)_{|M_{(t+s)g}(F_v)}$ et sont donc, d'aprËs la remarque prÈcÈdente,
distinctes de celles de $LT_{\pi_v}(t-1,s)_{|M_{(t+s)g}(F_v)}$ et donc des sous-quotients non nuls de $K$, 
de sorte que $K$ est nÈcessairement nul, i.e.
$$LT_{\pi_v}(t-1,s) \hookrightarrow \st_t(\pi\{ -\frac{s}{2} \} )_{|M_{tg}(F_v)} \times 
\speh_s (\pi \{ \frac{t}{2} \} ).$$
Comme ces deux termes ont les mÍmes dÈrivÈes, cette injection est un isomorphisme.
%
%
\end{proof}

\section{Stratification de Newton raffinÈe}

\begin{defis} \label{defi-strate2}
Pour tout $h_0 \leq h$, on introduit les strates \og non pures \fg{} suivantes
$$X^{\geq h}_{\IC,\bar s_v,\overline{1_{h_0}}}:= X^{\geq h}_{\IC,\bar s_v} \cap 
X^{\geq h_0}_{\IC,\bar s_v,\overline{1_{h_0}}},
$$
ainsi que
$$X^{\geq h}_{\IC,\bar s_v,\overline{1_{h_0}} \setminus \overline{1_{h_0+1}}}:=
X^{\geq h}_{\IC,\bar s_v,\overline{1_{h_0}}}
\setminus X^{\geq h'}_{\IC,\bar s_v,\overline{1_{h_0+1}}} \quad \hbox{ o˘ } 
\left \{ \begin{array}{ll} h'=h & \hbox{si } h_0< h \\ h'=h+1 & \hbox{sinon.} \end{array} \right.$$
On notera alors
$$j^{\geq h}_{\overline{1_{h_0}} \setminus \overline{1_{h_0+1}}}: 
X^{\geq h}_{\IC,\bar s_v,\overline{1_{h_0}} \setminus \overline{1_{h_0+1}}}
\hookrightarrow X^{\geq h}_{\IC,\bar s_v,\overline{1_{h_0}}}, \qquad
i^{h \leq +1}_{\overline{1_{h_0}},\overline{1_{h_0+1}}}:X^{\geq h+1}_{\IC,\bar s_v,\overline{1_{h_0+1}}}
\hookrightarrow X^{\geq h}_{\IC,\bar s_v,\overline{1_{h_0}}},$$
et $j^{=h}_{\overline{1_{h_0}} \setminus \overline{1_{h_0+1}}}:=i^h_{\overline{1_{h_0}}} \circ 
j^{\geq h}_{\overline{1_{h_0}} \setminus \overline{1_{h_0+1}}}$.
Dans le cas $h_0=h$, on notera simplement 
$$X^{\geq h}_{\IC,\bar s_v,\neq 1}:=X^{\geq h}_{\IC,\bar s_v,\overline{1_h} \setminus \overline{1_{h+1}}}$$ 
ainsi que 
$j^{=h}_{\neq 1}:=j^{=h}_{\overline{1_h} \setminus \overline{1_{h+1}}}$.
On utilisera aussi la notation
$$i^{h+1 \leq +0}_{\overline{1_{h_0}},\overline{1_{h_0+1}}}:
X^{\geq h+1}_{\IC,\bar s_v,\overline{1_{h_0+1}}} \hookrightarrow 
X^{\geq h+1}_{\IC,\bar s_v,\overline{1_{h_0}}}.$$
\end{defis}

\begin{lemm} \label{lem-jaffine} \phantomsection
L'inclusion ouverte $j^{\geq h}_{\neq 1}$ est affine.
\end{lemm}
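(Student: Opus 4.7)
The plan is to show that the closed complement of $X^{\geq h}_{\IC,\bar s_v,\neq 1}$ in $X^{\geq h}_{\IC,\bar s_v,\overline{1_h}}$, namely the closed subscheme $X^{\geq h+1}_{\IC,\bar s_v,\overline{1_{h+1}}}$, is an effective Cartier divisor. Once this is done, affineness of the open inclusion is automatic: on any affine open $\spec A \subseteq X^{\geq h}_{\IC,\bar s_v,\overline{1_h}}$ on which the divisor is cut out by a single non-zerodivisor $f \in A$, the complement is the principal localization $\spec A[1/f]$, hence affine over $\spec A$, and affineness being local on the target gives the result.

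To see codimension one, I would use the dimension formula for the refined strata, which follows from the $P_{h,d}(\OC_v/(\varpi_v^n))$-induction presentation: the finite étale quotient $X^{=h}_{\IC,\bar s_v} \simeq X^{=h}_{\IC,\bar s_v,\overline{1_h}} \times_{P_{h,d}} GL_d$ preserves dimension, giving $\dim X^{\geq h}_{\IC,\bar s_v,\overline{1_h}}=d-h$ and similarly $\dim X^{\geq h+1}_{\IC,\bar s_v,\overline{1_{h+1}}}=d-h-1$. To exhibit a local equation, I would work with the universal $\OC_v$-divisible module on the integral model at $v$: on $X^{\geq h}_{\IC,v}$ its connected part has height $\geq h$, and the jump of height from $h$ to $\geq h+1$ is cut out by the vanishing of the coefficient of $T^{q_v^h}$ in the Newton-polygon-type expansion of $[\varpi_v](T)$ on the connected part, which is a single Hasse-invariant-like equation. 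Pulled back to the refined stratum $X^{\geq h}_{\IC,\bar s_v,\overline{1_h}}$, I expect this equation to define exactly $X^{\geq h+1}_{\IC,\bar s_v,\overline{1_{h+1}}}$, because the $\overline{1_h}$-level structure extends canonically to the $\overline{1_{h+1}}$-level structure on the sublocus where the connected part has acquired one more unit of height.

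The main obstacle is precisely this last compatibility: one must check that on the refined stratum $X^{\geq h}_{\IC,\bar s_v,\overline{1_h}}$, the set-theoretic intersection with $X^{\geq h+1}$ already carries the $\overline{1_{h+1}}$-structure, so that the pulled-back equation still defines the full complement and not some smaller piece. This reduces to a local calculation with the universal truncated Barsotti-Tate group together with its level-$\varpi_v^n$ Drinfeld-type trivialization: in the chart around a point of $X^{\geq h+1}$, the height-$h$ part of the trivialization extends uniquely to a height-$(h+1)$ part because the $(h{+}1)$-st generator is forced to lie in the connected part once the height jumps. Finally, to check that the local equation is a non-zerodivisor, I would invoke equidimensionality of $X^{\geq h}_{\IC,\bar s_v,\overline{1_h}}$ in dimension $d-h$ together with the fact that no irreducible component is contained in the codimension-one closed subscheme $X^{\geq h+1}_{\IC,\bar s_v,\overline{1_{h+1}}}$, so the equation cannot vanish on any component.
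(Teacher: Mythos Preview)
Your overall plan—find a single local equation for the closed complement $X^{\geq h+1}_{\IC,\bar s_v,\overline{1_{h+1}}}\hookrightarrow X^{\geq h}_{\IC,\bar s_v,\overline{1_h}}$ so that its open complement is a principal open—is exactly the paper's strategy. The gap is in the equation you propose.

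A Hasse-type coefficient (the $T^{q_v^h}$-coefficient of $[\varpi_v](T)$ on the connected part) detects only the jump in Newton height: its vanishing locus on $X^{\geq h}_{\IC,\bar s_v,\overline{1_h}}$ is the intersection
\[
X^{\geq h+1}_{\IC,\bar s_v}\cap X^{\geq h}_{\IC,\bar s_v,\overline{1_h}}
= X^{\geq h+1}_{\IC,\bar s_v,\overline{1_h}},
\]
which is \emph{strictly larger} than $X^{\geq h+1}_{\IC,\bar s_v,\overline{1_{h+1}}}$ as soon as $d\geq h+2$. Your compatibility claim—that once the height jumps the $(h{+}1)$-st generator is ``forced'' into the connected part—is where the argument breaks: the Drinfeld level structure is extra data, not determined by the Newton polygon. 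In terms of the induced descriptions $X^{=h'}_{\IC,\bar s_v}\simeq X^{=h'}_{\IC,\bar s_v,\overline{1_{h'}}}\times_{P_{h',d}}GL_d$, the closure $X^{\geq h}_{\IC,\bar s_v,\overline{1_h}}$ meets $X^{=h+1}_{\IC,\bar s_v}$ in every piece $X^{=h+1}_{\IC,\bar s_v,\overline a}$ indexed by an $(h{+}1)$-plane $a$ containing $\langle e_1,\dots,e_h\rangle$; there are many such planes, and only one of them is $\overline{1_{h+1}}=\langle e_1,\dots,e_{h+1}\rangle$. So your equation cuts out too large a closed set, and its complement is strictly smaller than $X^{\geq h}_{\IC,\bar s_v,\neq 1}$.

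The paper's equation is instead the section $\iota(e_{h+1})$ of the universal Drinfeld level structure itself. By the very definition of the refined strata recalled there, the locus inside $X^{\geq h}_{\IC,\bar s_v,\overline{1_h}}$ where $\iota(e_{h+1})$ vanishes is exactly $X^{\geq h+1}_{\IC,\bar s_v,\overline{1_{h+1}}}$, and on any affine chart $\spec A$ this is the zero locus of one function. (Incidentally, no Cartier-divisor or non-zerodivisor check is needed for the conclusion: for any $f\in A$ the complement of $V(f)$ in $\spec A$ is $\spec A_f$, affine regardless.)
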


\begin{proof} 
Soit $\GC(h)$ le groupe de Barsotti-Tate universel sur $X_{I,\bar s_v,\overline{1_h}}^{=h}$:
$$0 \rightarrow \GC(h)^c \longrightarrow \GC(h) \longrightarrow \GC(h)^{et} \rightarrow 0$$
o˘ $\GC(h)^c$ (resp. $\GC(h)^{et}$) est connexe (resp. Ètale) de dimension $h$ (resp. $d-h$).
Notons 
$$\iota_{m_1}:(\PC_v^{-m_1}/\OC_v)^d \longrightarrow \GC(h)[p^{m_1}]$$
la structure de niveau universelle. Notant $(e_i)_{1 \leq i \leq d}$ la base canonique de
$(\PC_v^{-m_1}/\OC_v)^d$, on rappelle alors que $X_{I,\bar s_v,\overline{1_h}}^{=h}$ est 
dÈfini par la propriÈtÈ que $\bigl \{ \iota_{m_1}(e_i):~1 \leq i \leq h \bigr \}$ forme une base de Drinfeld de 
$\GC(h)^c[p^{m_1}]$.

Ainsi pour tout $\spec A \longrightarrow X^{\geq h}_{\IC,\bar s_v,\overline{1_h}}$, le fermÈ 
$X^{\geq h+1}_{\IC,\bar s_v,\overline{1_{h+1}}} \times_{X^{\geq h}_{\IC,\bar s_v,\overline{1_h}}} 
\spec A$ est donnÈ par l'annulation de $\iota(e_{h+1})$, d'o˘ le rÈsultat.
\end{proof}

\begin{nota} Pour $h_0 < h$ et un systËme local d'Harris-Taylor $HT(\pi_v,\Pi_t)$ sur
$X^{=h}_{\IC,\bar s_v}$, avec donc $h=tg$, on notera 
$$HT_{\neq \overline{1_{h_0}}}(\pi_v,\Pi_t)$$
la restriction de $HT(\pi_v,\Pi_t)$ ‡ $X^{=h}_{\IC,\bar s_v, \neq \overline{1_{h_0}}}$. 
Lorsque $h=h_0$, on notera plus simplement $HT_{\neq 1}(\pi_v,\Pi_t)$.
\end{nota}

On introduit de mÍme $X^{=h}_{\IC,\bar s_v,\overline{1_{h_0}}}:=X^{=h}_{\IC,\bar s_v} \cap 
X^{\geq h_0}_{\IC,\bar s_v,\overline{1_{h_0}}}$ ainsi que son adhÈrence
$X^{\geq h}_{\IC,\bar s_v,\overline{1_{h_0}}}$ dans $X^{\geq h}_{\IC,\bar s_v}$
avec
$$j^{\geq h}_{\overline{1_{h_0}}}: X^{=h}_{\IC,\bar s_v,\overline{1_{h_0}}} \hookrightarrow
X^{\geq h}_{\IC,\bar s_v,\overline{1_{h_0}}}$$
et $j^{=h}_{\overline{1_{h_0}}}=i^h_{\overline{1_{h_0}}} \circ j^{\geq h}_{\overline{1_{h_0}}}$
o˘ $i^h_{\overline{1_{h_0}}}: X^{\geq h}_{\IC,\bar s_v,\overline{1_{h_0}}} \hookrightarrow
X_{\IC,\bar s_v}$. On notera aussi, cf. (\ref{eq-strate-induite})
$$HT_{\overline{1_{h_0}}}(\pi_v,\Pi_t) = HT_{\overline{1_{h}}}(\pi_v,\Pi_t) \times_{P_{h_0,h,d}(F_v)}
P_{h,d}(F_v)$$ 
la restriction de $HT(\pi_v,\Pi_t)$ ‡ $X^{=h}_{\IC,\bar s_v,\overline{1_{h_0}}}$.

Fixons ‡ prÈsent une reprÈsentation irrÈductible cuspidale $\pi_v$ de $GL_g(F_v)$ et notons
$s=\lfloor \frac{d}{g} \rfloor$.
Rappelons l'ÈnoncÈ suivant de \cite{boyer-invent2} 4.5.1 dans le groupe de Grothendieck des faisceaux 
pervers de Hecke\footnote{pour les torsions, cf. la formule (\ref{eq-torsion})}
\addtocounter{smfthm}{1}
\begin{multline} \label{eq00}
j^{=tg}_{\overline{1_{tg}},!} HT_{\overline{1_{tg}}} (\pi_v,\Pi_t) =  \\
\sum_{i=t}^{s} \lexp p j^{=ig}_{\overline{1_{tg}},!*} HT_{\overline{1_{tg}}} \bigl ( \pi_v,\Pi_t \{ (i-t)\frac{g-1}{2} \} 
\otimes \st_{i-t}(\pi_v \{ -t \frac{g-1}{2} \} ) \bigr ) (\frac{i-t}{2}).
\end{multline}
Dans \cite{boyer-invent2}, pour calculer les germes des faisceaux de cohomologie de 
$\lexp p j^{=tg}_{\overline{1_{tg}},!*} HT_{\overline{1_{tg}}}(\pi_v,\Pi_t)$, on procËde comme suit.

(a) On considËre tout d'abord la filtration par les poids de $j^{=tg}_{\overline{1_{tg}},!} HT_{\overline{1_{tg}}}(\pi_v,\Pi_t)$, soit
$$\Fil^{t-s} (\pi_v,\Pi_t) \subset \cdots \subset \Fil^{-1}(\pi_v,\Pi_t) \subset 
\Fil^{0}(\pi_v,\Pi_t) =j^{=tg}_{\overline{1_{tg}},!} HT_{\overline{1_{tg}}}(\pi_v,\Pi_t)$$ 
dont les graduÈs sont 
$$\gr^{-k}(\pi_v,\Pi_t) \simeq \lexp p j^{=(t+k)g}_{\overline{1_{tg}},!*} HT_{\overline{1_{tg}}}
(\pi_v,\Pi_t \{ k \frac{g-1}{2} \} \otimes \st_{k}(\pi_v \{-t\frac{g-1}{2} \} )) (-\frac{k}{2}) \bigr ).$$

(b) On dispose alors d'une suite spectrale faisceautique
\addtocounter{smfthm}{1}
\begin{equation} \label{eq-ss1}
E_1^{p,q}=\hi^{p+q} \gr^{-p}(\pi_v,\Pi_t) \Rightarrow \hi^{p+q} j^{=tg}_{1,!} HT_1(\pi_v,\Pi_t).
\end{equation}
Pour tout point gÈomÈtrique $z$ de $X^{=h}_{\IC,\bar s_v}$, la fibre en $z$ de l'aboutissement
de cette suite spectrale est connue, nulle dËs que $h>tg$. En raisonnant par rÈcurrence les
fibres en $z$ des $E_1^{p,q}$ sont aussi connues pour tout $(p,q)$ si $p>0$:
\begin{itemize}
\item si $h$ n'est pas de la forme $(t+\delta)g$ alors les fibres en $z$ de $E_1^{p,q}$ pour $p>0$
sont toutes nulles;

\item pour $h=(t+\delta)g$, elles sont nulles sauf pour $q=(t+\delta)g-d-\delta$.
\end{itemize}

(c) Ainsi pour calculer les fibres des faisceaux de cohomologie de $j^{=tg}_{1,!*} HT_1(\pi_v,\Pi_t)$,
on est ramenÈ ‡ Ètudier les flËches $d_1^{p,q}$ pour $q=(t+\delta)g-d-\delta$ et $p>0$, et ‡ montrer
qu'elles sont non nulles ce qui suffit ‡ les caractÈriser. 

Le but des paragraphes suivant est de proposer une nouvelle preuve ÈlÈmentaire utilisant simplement 
les applications $j^{=h}_{\neq 1}$ et la thÈorie des reprÈsentations du groupe mirabolique.

\section{Faisceaux de cohomologie des faisceaux pervers d'Harris-Taylor}

CommenÁons par poser $h:=tg$ et
$$P(\pi_v,\Pi_t):=i^{h+1}_* \lexp p \hi^{-1} i^{h+1,*}_{\overline{1_h}} \bigl ( \lexp p j^{=h}_{\overline{1_h},!*}
HT_{\overline{1_h}}(\pi_v,\Pi_t) \bigr ),$$ 
de sorte que
\addtocounter{smfthm}{1}
\begin{equation} \label{eq-sec0}
0 \rightarrow P(\pi_v,\Pi_t) \longrightarrow j^{=h}_{\overline{1_h},!} HT_{\overline{1_h}}(\pi_v,\Pi_t)
\longrightarrow \lexp p j^{=h}_{\overline{1_h},!*} HT_{\overline{1_h}}(\pi_v,\Pi_t) \rightarrow 0.
\end{equation}

\begin{lemm} \label{lem-HT1}
Le complexe $i^{h+1,*}_{\overline{1_{h+1}}} P(\pi_v,\Pi_t)$ est pervers.
\end{lemm}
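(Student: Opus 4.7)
The plan is to apply the functor $i^{h+1,*}_{\overline{1_{h+1}}}$ directly to the short exact sequence (\ref{eq-sec0}), thereby reducing the claim to a statement about the perverse amplitude of $i^{h+1,*}_{\overline{1_{h+1}}} \mathcal{A}$, and then to squeeze this amplitude from both sides using the IC property of $\mathcal{A}$ and the affineness of $j^{\geq h}_{\neq 1}$. To shorten notation, set $\mathcal A := \lexp p j^{=h}_{\overline{1_h},!*} HT_{\overline{1_h}}(\pi_v,\Pi_t)$, $\mathcal B := j^{=h}_{\overline{1_h},!} HT_{\overline{1_h}}(\pi_v,\Pi_t)$, $\widetilde\imath := i^{h+1 \leq +1}_{\overline{1_h},\overline{1_{h+1}}}$ and $\widetilde\jmath := j^{\geq h}_{\neq 1}$, which form complementary immersions in $X^{\geq h}_{\IC,\bar s_v,\overline{1_h}}$.

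First I would observe that $\widetilde\imath^*\mathcal B = 0$: indeed $\widetilde\imath$ factors through the closed complement $X^{\geq h+1}_{\IC,\bar s_v,\overline{1_h}}$ of the open stratum on which $HT_{\overline{1_h}}(\pi_v,\Pi_t)$ is supported, and $j^{=h}_{\overline{1_h},!}$ of any object vanishes there. Applying $\widetilde\imath^*$ to (\ref{eq-sec0}) then yields an isomorphism $\widetilde\imath^* P(\pi_v,\Pi_t) \simeq \widetilde\imath^* \mathcal A[-1]$ in the derived category, and the task reduces to showing that $\widetilde\imath^* \mathcal A$ is concentrated in perverse degree $-1$.

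For the upper bound, the defining property of the intermediate extension gives $i^{h+1,*}_{\overline{1_h}}\mathcal A \in \lexp p D^{\leq -1}$; the further closed pullback $X^{\geq h+1}_{\IC, \bar s_v, \overline{1_{h+1}}} \hookrightarrow X^{\geq h+1}_{\IC, \bar s_v, \overline{1_h}}$ is right $t$-exact for the perverse $t$-structure, hence preserves the bound, yielding $\widetilde\imath^*\mathcal A \in \lexp p D^{\leq -1}$. For the lower bound, I invoke Lemma \ref{lem-jaffine}: since $\widetilde\jmath$ is affine, the Beilinson--Bernstein--Deligne theorem makes $\widetilde\jmath_!$ $t$-exact for the perverse $t$-structure, so $\widetilde\jmath_! \widetilde\jmath^* \mathcal A$ is perverse. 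The localization triangle
$$\widetilde\jmath_! \widetilde\jmath^* \mathcal A \longrightarrow \mathcal A \longrightarrow \widetilde\imath_* \widetilde\imath^* \mathcal A \stackrel{+1}{\longrightarrow}$$
has its first two terms perverse, so $\widetilde\imath_* \widetilde\imath^* \mathcal A$ lies in perverse degrees $[-1,0]$, and the same holds for $\widetilde\imath^* \mathcal A$ by $t$-exactness of $\widetilde\imath_*$.

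Combining both bounds forces $\widetilde\imath^* \mathcal A$ to be concentrated in perverse degree exactly $-1$, so $\widetilde\imath^* P(\pi_v,\Pi_t) \simeq \widetilde\imath^* \mathcal A[-1]$ is perverse. The only substantive ingredient beyond the formal six-functor yoga is Lemma \ref{lem-jaffine}: the affineness of $\widetilde\jmath$ is precisely what makes $\widetilde\jmath_! \widetilde\jmath^* \mathcal A$ perverse and hence yields the crucial lower bound; the ``main obstacle'' would have been the absence of this lower bound, and this is exactly where the geometric simplification over \cite{boyer-invent2} enters.
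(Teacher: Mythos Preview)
Your proof is correct and uses the same essential ingredients as the paper's: the perverse bound $i^{*}\mathcal A \in \lexp p D^{\leq -1}$ coming from the intermediate extension, and the affineness of $j^{\geq h}_{\neq 1}$ from Lemma~\ref{lem-jaffine} to force the lower bound via a localization triangle. The route, however, is slightly more direct than the paper's. The paper factors the closed restriction in two steps, first to $X^{\geq h+1}_{\IC,\bar s_v,\overline{1_h}}$ and then to $X^{\geq h+1}_{\IC,\bar s_v,\overline{1_{h+1}}}$, and links the two steps by the composition spectral sequence $E_2^{r,s}=\lexp p \hi^r i^{h+1 \leq +0,*}_{\overline{1_h},\overline{1_{h+1}}} \bigl ( \lexp p \hi^s i^{h \leq +1,*}_{\overline{1_h}} F \bigr )$; this requires invoking the affineness of \emph{two} open inclusions, namely $j^{\geq h}_{\overline{1_h}}$ (to collapse the $s$-axis) and then $j^{\geq h}_{\neq 1}$ (to collapse the $r$-axis). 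You bypass the spectral sequence entirely by observing that $\widetilde\imath^{\,*}\mathcal B=0$, which immediately identifies $\widetilde\imath^{\,*} P(\pi_v,\Pi_t)$ with $\widetilde\imath^{\,*}\mathcal A[-1]$, and then a single localization triangle for the pair $(\widetilde\jmath,\widetilde\imath)$ suffices. Your argument is therefore a genuine streamlining: only Lemma~\ref{lem-jaffine} is needed, and no spectral sequence intervenes. (One notational slip: the map you denote $i^{h+1 \leq +1}_{\overline{1_h},\overline{1_{h+1}}}$ is what the paper calls $i^{h \leq +1}_{\overline{1_h},\overline{1_{h+1}}}$.)
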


\begin{proof}
Suivant la notation \ref{nota-jh1},
notons $F:=\lexp p j^{\geq h}_{\overline{1_h},!*} HT_{\overline{1_h}}(\pi_v,\Pi_t)$ de sorte qu'il s'agit 
de montrer que 
$$i^{h+1 \leq +0,*}_{\overline{1_h},\overline{1_{h+1}}} \bigl ( \lexp p \hi^{-1} i^{h\leq +1,*}_{\overline{1_h}} F \bigr )$$
est pervers. Pour ce faire, on utilise la suite spectrale
$$E_2^{r,s}=\lexp p \hi^r i^{h+1 \leq +0,*}_{\overline{1_h},\overline{1_{h+1}}} \Bigl ( \lexp p \hi^s 
i^{h \leq +1,*}_{\overline{1_h}} F \Bigr ) \Rightarrow \lexp p \hi^{r+s} 
i^{h \leq +1,*}_{\overline{1_h},\overline{1_{h+1}}} F.$$
ConsidÈrons le triangle distinguÈ
$$j^{\geq h}_{\overline{1_h},!} j^{\geq h,*}_{\overline{1_h}} F \longrightarrow F \longrightarrow 
i^{h \leq +1}_{\overline{1_h},*} i^{h \leq +1,*}_{\overline{1_h}} F \leadsto.$$
Comme $j^{\geq h}_{\overline{1_h}}$ est affine, alors $j^{\geq h}_{\overline{1_h},!} j^{\geq h,*}_{\overline{1_h}} F$
est pervers et  la suite exacte longue de cohomologie perverse
du triangle distinguÈ prÈcÈdent s'Ècrit
$$0 \rightarrow i^{h \leq +1}_{\overline{1_h},*} \lexp p \hi^{-1} i^{h \leq +1,*}_{\overline{1_h}} F \longrightarrow 
\lexp p j^{\geq h}_{\overline{1_h},!} j^{\geq h,*}_{\overline{1_h}} F \longrightarrow F \longrightarrow 
i^{h \leq +1}_{\overline{1_h},*} \lexp p \hi^0 i^{h \leq +1,*}_{\overline{1_h}} F \rightarrow 0,$$
et donc en particulier, $\lexp p \hi^s i^{h \leq +1,*}_{\overline{1_h}} F$ est nul
pour tout $s<-1$. La surjectivitÈ $j^{\geq h}_{\overline{1_h},!} j^{\geq h,*}_{\overline{1_h}} F
\twoheadrightarrow F$, implique aussi la nullitÈ pour $s=0$. Ainsi la suite spectrale prÈcÈdente
dÈgÈnËre en $E_2$ avec
$$\lexp p \hi^r i^{h \leq +1,*}_{\overline{1_h},\overline{1_{h+1}}} F \simeq 
\lexp p \hi^{r+1} i^{h+1 \leq +0,*}_{\overline{1_h},\overline{1_{h+1}}}
\bigl ( \lexp p \hi^{-1} i^{h \leq +1,*}_{\overline{1_h}} F \bigr ).$$

De la mÍme faÁon, comme $j^{\geq h}_{\neq 1}$ est affine,
$\lexp p \hi^r i^{h \leq +1,*}_{\overline{1_h},\overline{1_{h+1}}} F $ est nul pour $r<-1$, d'o˘ le rÈsultat.
\end{proof}

En particulier, on a donc une suite exacte courte de faisceaux pervers
\addtocounter{smfthm}{1}
\begin{equation} \label{eq-sec1}
0 \rightarrow j^{=h+1}_{\overline{1_h} \setminus \overline{1_{h+1}},!}
 j^{=h+1,*}_{\overline{1_h} \setminus \overline{1_{h+1}} } P(\pi_v,\Pi_t)  
\longrightarrow P(\pi_v,\Pi_t) \longrightarrow i^{h+1}_{\overline{1_{h+1}},*} 
\lexp p i^{h+1,*}_{\overline{1_{h+1}}} P(\pi_v,\Pi_t) \rightarrow 0.
\end{equation}

\begin{prop} (cf. aussi le lemme B.3.3 de \cite{boyer-duke}) \label{prop-fond1} \\
Avec les notations prÈcÈdentes on a
$$ i^{tg+1}_{\overline{1_{tg+1}},*} \lexp p i^{tg+1,*}_{\overline{1_{tg+1}}} P(\pi_v,\Pi_t) \simeq
\lexp p j^{=(t+1)g}_{\overline{1_{tg+1}},!*} 
HT_{\overline{1_{tg+1}}} (\pi_v,\Pi_t\{ \frac{g-1}{2} \} \otimes (\pi_v)_{|P_{1,g}(F_v)} \{ -t \frac{g-1}{2} \} ) 
(\frac{1}{2}).$$
\end{prop}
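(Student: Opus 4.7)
The plan is to identify the target perverse sheaf by computing its class in the Grothendieck group through a combination of the decomposition (\ref{eq00}) and the mirabolic restriction isomorphisms of Lemma \ref{lem-mirabolique}, and then upgrading the equality of classes to an isomorphism of perverse sheaves via the perversity statement of Lemma \ref{lem-HT1}.

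First I would extract $[P(\pi_v,\Pi_t)]$ from (\ref{eq00}) by subtracting the $i=t$ term, which is precisely $\lexp p j^{=tg}_{\overline{1_{tg}},!*} HT_{\overline{1_{tg}}}(\pi_v,\Pi_t)$, i.e.~the quotient in (\ref{eq-sec0}). This gives $[P(\pi_v,\Pi_t)]$ as the sum over $i = t+1, \ldots, s$ of intermediate extensions on the $ig$-strata. Next I would apply $\lexp p i^{tg+1,*}_{\overline{1_{tg+1}}}$ term by term. Geometrically, passing from the $\overline{1_{tg}}$-level to the $\overline{1_{tg+1}}$-level refines the parabolic equivariance, so on the representation side the factor $\st_{i-t}(\pi_v\{-t\frac{g-1}{2}\})$ of each summand is restricted from $GL_{(i-t)g}(F_v)$ to $P_{1,(i-t)g}(F_v)$. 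Lemma \ref{lem-mirabolique} then identifies this mirabolic restriction with an expression involving $(\pi_v)_{|P_{1,g}(F_v)}$ and a residual Steinberg factor on the remaining $(i-t-1)g$ coordinates, up to the suitable twists.

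Second, I would recognize the resulting sum as the Grothendieck-group expansion of a single irreducible perverse sheaf. Setting $\Pi_{t+1} := \Pi_t\{\frac{g-1}{2}\} \otimes (\pi_v)_{|P_{1,g}(F_v)}\{-t\frac{g-1}{2}\}$, a second application of (\ref{eq00}), now with parameters $(t+1,\Pi_{t+1})$ and stratification $\overline{1_{tg+1}}$, expands $j^{=(t+1)g}_{\overline{1_{tg+1}},!} HT_{\overline{1_{tg+1}}}(\pi_v,\Pi_{t+1})(\frac{1}{2})$ into the claimed intermediate extension on the $(t+1)g$-stratum plus contributions for $i \geq t+2$. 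The key check is that these higher-stratum contributions match termwise with those produced by the first application of Lemma \ref{lem-mirabolique}, so that the Grothendieck class of the closed piece in (\ref{eq-sec1}) collapses to just the $i = t+1$ term, namely $\lexp p j^{=(t+1)g}_{\overline{1_{tg+1}},!*} HT_{\overline{1_{tg+1}}}(\pi_v,\Pi_{t+1})(\frac{1}{2})$.

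Finally, by Lemma \ref{lem-HT1} the closed piece $i^{tg+1,*}_{\overline{1_{tg+1}}} P(\pi_v,\Pi_t)$ is perverse, while the right-hand side is irreducible (using that the mirabolic restriction $(\pi_v)_{|P_{1,g}(F_v)}$ of a cuspidal representation is irreducible, by Bernstein-Zelevinsky). Equality of Grothendieck classes together with irreducibility of one side forces an isomorphism. The main obstacle I expect is the telescoping in the second paragraph: aligning the parameters of the two applications of (\ref{eq00}) and choosing the correct version of Lemma \ref{lem-mirabolique} (first or second formula, with the right $(t,s)$) requires careful bookkeeping of twists $\{\cdot\}$ and shifts $(\cdot/2)$ so that all higher-stratum contributions cancel exactly, leaving only the $i=t+1$ term.
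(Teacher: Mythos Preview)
Your overall strategy---compute the Grothendieck class of the closed piece and then upgrade to an isomorphism via Lemma~\ref{lem-HT1}---matches the paper's. The gap is in your step~2, and it is the crux of the whole argument.

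You assert that applying $i^{tg+1,*}_{\overline{1_{tg+1}}}$ to a summand $\lexp p j^{=ig}_{\overline{1_{tg}},!*} HT_{\overline{1_{tg}}}(\pi_v,\Pi_t\otimes\st_{i-t}(\pi_v))$ simply ``restricts $\st_{i-t}$ from $GL_{(i-t)g}$ to $P_{1,(i-t)g}$''. This is correct for the local system $HT$ on the \emph{open} stratum $X^{=ig}$, where passing from $\overline{1_{tg}}$ to $\overline{1_{tg+1}}$ genuinely just refines the equivariance. But the summands are \emph{intermediate extensions}, and $i^*$ of an IC sheaf involves its stalks on all deeper strata; IC does not commute with closed restriction. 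Computing those stalks is precisely the content of Corollary~\ref{coro-hi}, which in turn rests on Proposition~\ref{prop-fond1} at higher ranks. So your step~2 silently presupposes the proposition you are proving. No amount of bookkeeping with twists in your step~3 will close this circle.

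The paper handles this by making the descending induction on $t$ explicit. Assuming the proposition for ranks $t+1,\dots,s$, Corollaries~\ref{coro-hi} and~\ref{coro-j} become available for the summands with $i\geq t+1$ in (\ref{eq-P}). The paper then computes the \emph{open} piece $j^{=tg}_{\overline{1_h}\setminus\overline{1_{h+1}},!}j^{*}P(\pi_v,\Pi_t)$ term by term via Corollary~\ref{coro-j} (this is where the mirabolic lemma actually enters, inside the proof of that corollary), and subtracts from (\ref{eq-P}) using the exact sequence (\ref{eq-sec1}). Your idea of attacking the closed piece directly would also work, but only once the induction is in place; without it, step~2 is an assertion, not an argument.

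A minor secondary issue: your final step invokes irreducibility of the right-hand side, but $\Pi_t$ is arbitrary, so the right-hand side need not be simple. This is easily repaired (both sides are perverse and have the same class, and the right-hand side is an intermediate extension, hence has no sub- or quotient supported on the boundary), but it deserves a sentence.
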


\rem Rappelons que le terme de droite de l'isomorphisme de la proposition prÈcÈdente est une notation
pour
$$\ind_{P_{tg,tg+1,(t+1)g,d}(F_v)}^{P_{tg,tg+1,d}(F_v)} \lexp p j^{=(t+1)g}_{\overline{1_{(t+1)g}},!*} 
HT_{\overline{1_{(t+1)g}}} (\pi_v,\Pi_t\{ \frac{g-1}{2} \} \otimes (\pi_v)_{|P_{1,g}(F_v)} \{ -t \frac{g-1}{2} \} ) 
(\frac{1}{2}).$$

Avant de donner la preuve de cette proposition, commenÁons par deux corollaires.

\begin{coro} \label{coro-hi}
Soient $h_0 \geq h=tg$ et $z$ un point gÈomÈtrique de $X^{=h_0}_{\IC,\bar s_v,\overline{1_{h}}}$. Le
germe en $z$ du $i$-Ëme faisceau de cohomologie de $\lexp p j^{=tg}_{\overline1_{h},!*} 
HT_{\overline{1_{h}}}(\pi_v,\st_t(\pi_v))$ est
\begin{itemize}
\item nul si $(h_0,i)$ n'est pas de la forme $\bigl ( (t+\delta)g, (t+\delta)g-d-\delta \bigr )$ avec
$(t+\delta)g \leq d$,

\item et sinon, muni des actions par correspondances de
$G(\Am^{\oo,v}) \times P_{h,d}(F_v) \times \Zm$, il est isomorphe au germe en $z$ de 
$HT_{\overline{1_h}}(\pi_v,\st_t(\pi_v \{ \delta \frac{g-1}{2} \} ) \otimes \speh_\delta 
(\pi_v \{-t \frac{g-1}{2} \} ))$.
\end{itemize}
\end{coro}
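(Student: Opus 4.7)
The plan is to proceed by descending induction on $s-t$, that is, by first proving the corollary when $t$ is close to $s=\lfloor d/g\rfloor$ and working downward.

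The base case is $h_0=tg$: the perverse sheaf $\lexp p j^{=tg}_{\overline{1_h},!*} HT_{\overline{1_h}}(\pi_v,\st_t(\pi_v))$ restricted to its open stratum $X^{=tg}_{\overline{1_h}}$ is just $HT_{\overline{1_h}}(\pi_v,\st_t(\pi_v))[d-tg]$, so the germ at $z\in X^{=tg}_{\overline{1_h}}$ is concentrated in cohomological degree $tg-d$ and equals $HT_{\overline{1_h}}(\pi_v,\st_t(\pi_v))$, matching the formula for $\delta=0$ with the convention that $\speh_0$ is the trivial representation.

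For the induction step, suppose $h_0>h=tg$. Using the $P_{h,d}(F_v)$-equivariance of both sides, one reduces to the case where $z\in X^{=h_0}_{\overline{1_{h+1}}}$. The short exact sequence (\ref{eq-sec0}) yields a long exact sequence of cohomology sheaves; since $j^{=h}_{\overline{1_h},!} HT_{\overline{1_h}}(\pi_v,\st_t(\pi_v))$ is the extension by zero of a sheaf on $X^{=h}$, its germ at $z\notin X^{=h}$ vanishes, so
\[
\HC^{i}\bigl(\lexp p j^{=h}_{\overline{1_h},!*} HT_{\overline{1_h}}(\pi_v,\st_t(\pi_v))\bigr)_z \simeq \HC^{i+1}\bigl(P(\pi_v,\st_t(\pi_v))\bigr)_z.
\]
Now apply (\ref{eq-sec1}): because $z\in \overline{1_{h+1}}$, the first term, supported on $X^{=h+1}_{\overline{1_h}\setminus\overline{1_{h+1}}}$, contributes nothing at $z$, so the germ of $P(\pi_v,\st_t(\pi_v))$ agrees with that of $i^{h+1}_{\overline{1_{h+1}},*}\lexp p i^{h+1,*}_{\overline{1_{h+1}}} P(\pi_v,\st_t(\pi_v))$. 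By Proposition \ref{prop-fond1} this is a Harris-Taylor perverse sheaf at the higher stratum $(t+1)g$, with representation $\st_t(\pi_v)\{(g-1)/2\}\otimes(\pi_v)_{|P_{1,g}(F_v)}\{-t(g-1)/2\}$. Iterating this reduction $\delta$ times produces, at a point of $X^{=(t+\delta)g}$, a single nonzero cohomology sheaf in degree $(t+\delta)g-d-\delta$ with an accumulated Tate twist $\delta/2$.

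The main obstacle is the representation-theoretic bookkeeping, since each application of Proposition \ref{prop-fond1} introduces a new mirabolic factor $(\pi_v)_{|P_{1,g}}$ and the representation $\Pi'_{t+1}$ produced is not of Steinberg form. This forces one to phrase the induction hypothesis for the broader class of representations $LT_{\pi_v}(\delta',s')$, which by the remark following Lemma \ref{lem-mirabolique} is stable under the operation produced by Proposition \ref{prop-fond1}. Lemma \ref{lem-mirabolique} is precisely the combinatorial input that identifies the iteratively accumulated tensor product with the mirabolic restriction of $\st_t(\pi_v\{-\delta/2\})\otimes\speh_\delta(\pi_v\{t/2\})$, yielding the stated germ.
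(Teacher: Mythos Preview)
Your argument is correct and follows the same descending induction on $t$ as the paper, reducing via (\ref{eq-sec0}) and (\ref{eq-sec1}) to Proposition \ref{prop-fond1} and then invoking Lemma \ref{lem-mirabolique} for the representation-theoretic identification. The only simplification you are missing is that the factor $\Pi_t$ in $HT_{\overline{1_h}}(\pi_v,\Pi_t)=\LC(\pi_v[t]_D)_1\otimes\Pi_t\otimes\Xi^{\cdots}$ is an external constant tensor factor, so the statement for $\st_t(\pi_v)$ immediately gives its analogue for arbitrary $\Pi_t$; there is therefore no need to enlarge the induction hypothesis to the class $LT_{\pi_v}(\delta',s')$, and the paper simply applies the hypothesis at level $t+1$ directly and uses the first isomorphism of Lemma \ref{lem-mirabolique} (with parameters $1$ and $\delta-1$) once at the end.
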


\begin{proof}
On raisonne par rÈcurrence sur $t$ de $s$ ‡ $1$. Pour $t=s$, on a 
$\lexp p j^{=tg}_{\overline{1_{h}},!*} HT_{\overline{1_{h}}} (\pi_v,\st_t(\pi_v)) \simeq 
\lexp p j^{=tg}_{\overline{1_{h}},!} HT_{\overline{1_{h}}}(\pi_v,\st_t(\pi_v))$ 
d'o˘ le rÈsultat. Supposons alors le rÈsultat acquis au rang $t+1$. 

\rem De la suite spectrale associÈe ‡ la filtration par les poids de 
$ j^{=tg}_{\overline{1_{h}},!} HT_{\overline{1_{h}}}(\pi_v,\st_t(\pi_v))$ dont les graduÈs sont donnÈs
par (\ref{eq00}), les germes des faisceaux de cohomologie de 
$\lexp p j^{=tg}_{\overline{1_{h}},!*} HT_{\overline{1_{h}}}(\pi_v,\st_t(\pi_v))$ sont de la forme
$HT_{\overline{1_{h}}}(\pi_v,\st_t(\pi_v \{\delta \frac{g-1}{2} \}) \otimes \Pi)$ o˘ $\Pi$ est une reprÈsentation de 
$GL_{\delta g}(F_v)$ dont le support cuspidal est un segment de Zelevinski centrÈ sur 
$\pi_v \{ -t \frac{g-1}{2})$.

Par Èquivariance, on supposera
que $z$ est un point de $X^{\geq tg}_{\IC,\bar s_v,\overline{1_{tg+1}}}$.
D'aprËs (\ref{eq-sec0}),  le germe en $z$ du $i$-Ëme faisceau de cohomologie de 
$\lexp p j^{=tg}_{\overline{1_{h}},!*} HT_{\overline{1_{h}}}(\pi_v,\st_t(\pi_v))$, muni des actions 
par correspondances de $G(\Am^{\oo,v}) \times P_{h,d}(F_v) \times \Zm$, est isomorphe ‡ celui du 
$(i+1)$-Ëme de $P(\pi_v,\st_t(\pi_v))$, lequel d'aprËs (\ref{eq-sec1}) et la proposition prÈcÈdente est,
muni des actions par correspondances de $G(\Am^{\oo,v}) \times P_{h,h+1,d}(F_v) \times \Zm$,
isomorphe ‡ celui du $(i+1)$-Ëme faisceau de cohomologie de 
$\lexp p j^{=(t+1)g}_{\overline{1_{tg+1}},!*} 
HT_{\overline{1_{tg+1}}} (\pi_v,\st_t(\pi_v \{ \frac{g-1}{2} \}) \otimes 
(\pi_v)_{|P_{1,g}(F_v)} \{ -t \frac{g-1}{2} \} ) (\frac{1}{2})$.

D'aprËs l'hypothËse de rÈcurrence, ce germe est nul si $(h_0,i+1)$ n'est pas de la forme \\
$\bigl ( (t+1+\delta-1)g,(t+1+\delta-1)g-d-\delta+1 \bigr )$ et sinon il est isomorphe,
muni des actions par correspondances de $G(\Am^{\oo,v}) \times P_{h,h+1,d}(F_v) \times \Zm$,
au germe en $z$ de 
$HT_{\overline{1_{(t+\delta)g}}}(\pi_v, \Pi) (\frac{1}{2}) \times_{P_{tg,tg+1,(t+\delta)g,d}(F_v)}
P_{tg,tg+1,d}(F_v)$ o˘
$$\Pi:=
\st_t(\pi_v \{\delta \frac{g-1}{2} \}) \otimes 
(\pi_v)_{|P_{1,g}(F_v)} \{ (\delta-1-t) \frac{g-1}{2} \}  \otimes \speh_{\delta-1}
(\pi_v \{ - (t+1) \frac{g-1}{2} \}).$$
En prenant $t=1$ et $s=\delta-1$ dans le premier isomorphisme du lemme \ref{lem-mirabolique}, on
obtient alors 
$$(\pi_v)_{|M_{g}(F_v)} \{ (\delta-1) \frac{g-1}{2} \}  \otimes \speh_{\delta-1}
(\pi_v \{ - \frac{g-1}{2} \}) \simeq \speh_\delta(\pi_v)_{|M_{\delta g}(F_v)}$$
de sorte que $\Pi \simeq \st_t(\pi_v \{\delta \frac{g-1}{2} \}) \otimes 
\speh_\delta(\pi_v \{ -t \frac{g-1}{2}))_{|M_{\delta g}(F_v)}$. 
N'ayant qu'une unique dÈrivÈe non nulle irrÈductible,
$\speh_\delta(\pi_v \{ -t \frac{g-1}{2}))_{|M_{\delta g}(F_v)}$ est irrÈductible de sorte que 
le rÈsultat dÈcoule de la remarque ci-avant.
\end{proof}

\rem Une autre faÁon d'Ènoncer le rÈsultat prÈcÈdent consiste ‡ dire que les flËches de la suite
spectrale (\ref{eq-ss1}) sont \og le moins triviales possible \fg.

\begin{coro} \label{coro-j}
Soit $h_0=t_0g \leq h=tg$ avec $1 \leq t <s$. On a alors la suite exacte courte
\begin{multline*}
0 \rightarrow 
\lexp p j^{=(t+1)g}_{\overline{1_{h_0+1}},!*} HT_{\overline{1_{h_0+1}}}
\bigr (\pi_v, \Pi_{t_0} \{ \frac{g-1}{2} \} \otimes \st_{t+1-t_0}
(\pi_v \{ -t_0\frac{g-1}{2} \} )_{P_{1,(t+1-t_0)g}(F_v)} \bigl ) (\frac{1}{2}) \\
\longrightarrow j^{=tg}_{\overline{1_{h_0}} \setminus \overline{1_{h_0+1}},!}
j^{=tg,*}_{\overline{1_{h_0}} \setminus \overline{1_{h_0+1}}} 
\bigl ( \lexp p j^{=tg}_{\overline{1_{h_0}}!*} HT_{\overline{1_{h_0}}}(\pi_v,\Pi_{t_0} \otimes 
\st_{t-t_0}(\pi_v  \{ -t_0\frac{g-1}{2} \}) ) \bigr ) \\
\longrightarrow \lexp p j^{=tg}_{\overline{1_{h_0}} \setminus \overline{1_{h_0+1}},!*} 
HT_{\overline{1_{h_0}} \setminus \overline{1_{h_0+1}}} (\pi_v,\Pi_{t_0} \otimes 
\st_{t-t_0}(\pi_v  \{ -t_0\frac{g-1}{2} \}))  \rightarrow 0.
\end{multline*}
\end{coro}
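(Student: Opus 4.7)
The strategy is to transpose the construction of the two paragraphs just before, namely the affine-open trick of Lemma \ref{lem-jaffine} coupled with the short exact sequences (\ref{eq-sec0}), (\ref{eq-sec1}) and Proposition \ref{prop-fond1}, from the base case $h_0=tg$ to the more refined stratification $h_0 \leq tg$, applied to the perverse sheaf
$$F := \lexp p j^{=tg}_{\overline{1_{h_0}},!*} HT_{\overline{1_{h_0}}}(\pi_v, \Pi_{t_0} \otimes \st_{t-t_0}(\pi_v))$$
in place of $\lexp p j^{=h}_{\overline{1_h},!*} HT_{\overline{1_h}}(\pi_v,\Pi_t)$.

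\textbf{Step 1 (Affinity).} First generalise Lemma \ref{lem-jaffine}: the open immersion $j^{\geq tg}_{\overline{1_{h_0}} \setminus \overline{1_{h_0+1}}}$ is affine. The Drinfeld level structure argument carries over verbatim, the closed complement being cut out by the vanishing of $\iota(e_{h_0+1})$.

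\textbf{Step 2 (Perverse short exact sequence).} Apply the adjunction distinguished triangle for this affine open immersion to $F$. Since $j^{\geq tg}_{\overline{1_{h_0}} \setminus \overline{1_{h_0+1}}}$ is affine and $F$ is a simple perverse sheaf, the reasoning of Lemma \ref{lem-HT1} shows that the triangle is a short exact sequence of perverse sheaves on $X^{\geq tg}_{\IC,\bar s_v,\overline{1_{h_0}}}$, whose middle term is $j^{=tg}_{\overline{1_{h_0}} \setminus \overline{1_{h_0+1}},!} j^{=tg,*}_{\overline{1_{h_0}} \setminus \overline{1_{h_0+1}}} F$ and whose canonical quotient is the intermediate extension of the restriction of $F$ to the open $\overline{1_{h_0}} \setminus \overline{1_{h_0+1}}$, i.e.\ precisely the third term of the claimed sequence.

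\textbf{Step 3 (Identification of the kernel).} It remains to compute the kernel $K$, which is a perverse sheaf supported on the closed complement, that is on $X^{\geq (t+1)g}_{\IC,\bar s_v,\overline{1_{h_0+1}}}$. Proposition \ref{prop-fond1} treats the analogous computation when $t_0=t$: it identifies the boundary contribution with $\lexp p j^{=(t+1)g}_{\overline{1_{h_0+1}},!*}$ of a local system on $X^{=(t+1)g}$ obtained by adjoining a factor $(\pi_v)_{|P_{1,g}(F_v)}$ with the correct Tate twist $\tfrac{1}{2}$. Tracking the same argument with $\Pi_t$ replaced by $\Pi_{t_0} \otimes \st_{t-t_0}(\pi_v)$ gives $K$ as an intermediate extension from the next stratum of $HT_{\overline{1_{h_0+1}}}(\pi_v, \Pi_{t_0}\{\tfrac{g-1}{2}\} \otimes \st_{t-t_0}(\pi_v\{\tfrac{g-1}{2}\}) \otimes (\pi_v)_{|P_{1,g}(F_v)}\{-t\tfrac{g-1}{2}\})(\tfrac{1}{2})$. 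The final step then consists in applying Lemma \ref{lem-mirabolique} (in the form of its first isomorphism with $t=1$, $s=t-t_0$, combined with the remark that follows it) to rewrite the tensor factor $\st_{t-t_0}(\pi_v\{\tfrac{g-1}{2}\}) \otimes (\pi_v)_{|P_{1,g}(F_v)}\{-t\tfrac{g-1}{2}\}$ as the mirabolic restriction $\st_{t+1-t_0}(\pi_v\{\tfrac{g-1}{2}\})_{|P_{1,(t+1-t_0)g}(F_v)}$, producing the first term of the sequence.

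The main obstacle is Step 3: identifying $K$ cleanly and with the correct shifts and twists, including the passage from $X^{\geq tg}_{\overline{1_{h_0}}}$ to $X^{\geq (t+1)g}_{\overline{1_{h_0+1}}}$. This is the heart of the statement and relies essentially on re-running the proof of Proposition \ref{prop-fond1} with a Steinberg factor attached; once this is in hand, the mirabolic recombination of Step 4 is purely formal.
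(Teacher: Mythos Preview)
Your Steps 1--2 are fine and essentially what the paper takes for granted. The real content is Step 3, and there the proposal has a genuine gap.

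The difficulty is that Proposition~\ref{prop-fond1} is specifically about the geometry $h_0=tg$: the closed stratum removed is $X^{\geq tg+1}_{\overline{1_{tg+1}}}$, cut out by the vanishing of $\iota(e_{tg+1})$. Replacing the coefficient representation $\Pi_t$ by $\Pi_{t_0}\otimes\st_{t-t_0}(\pi_v)$ does \emph{not} change this geometry; it still computes a boundary on $\overline{1_{tg+1}}$. In Corollary~\ref{coro-j} with $h_0<tg$, the closed complement is $X^{\geq tg}_{\overline{1_{h_0+1}}}$, cut out by $\iota(e_{h_0+1})$, and this already contains points of Newton height $tg$. So the kernel you are after is not an instance of Proposition~\ref{prop-fond1}, and the ``rewriting via Lemma~\ref{lem-mirabolique}'' in your final step has no input to act on. There is also a circularity hazard: the proof of Proposition~\ref{prop-fond1} itself uses Corollary~\ref{coro-j} (at higher rank), so ``tracking the same argument'' would need to be set up as part of a joint downward induction on $t$, which you do not do.

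The paper's route to Step 3 is different and more hands-on. First it identifies the kernel abstractly as (\ref{eq-terme2}), then it proceeds in two moves: (i) it produces a \emph{surjection} from (\ref{eq-terme2}) onto the claimed first term by writing $j^{=tg}_{\overline{1_{h_0}},!}HT_{\overline{1_{h_0}}}(\ldots)$ in the Grothendieck group via (\ref{eq00}), passing through the weight filtration, and using the representation-theoretic surjection $\st_{t-t_0}\times\st_{i-t}\twoheadrightarrow\st_{i-t_0}$; (ii) it then shows this surjection is an isomorphism by checking that source and target have the \emph{same stalks} at every geometric point of $X^{\geq tg+1}_{\overline{1_{h_0+1}}}$. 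Step (ii) is where Corollary~\ref{coro-hi} (already established at ranks $\geq t$ through the joint induction) is invoked, and where \emph{both} isomorphisms of Lemma~\ref{lem-mirabolique} are needed --- the first for the stalks of the claimed kernel $A$, the second for those of the quotient $B$ --- to see that they match as $LT_{\pi_v}(t-t_0,\delta-1)_{|P_{1,(t-t_0+\delta)g}}$. This stalk comparison is the missing idea in your sketch.
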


\rem En ce qui concerne les dÈcalages, et notamment o˘ est passÈ le $\frac{g-1}{2}$ 
qui s'applique sur $\Pi_{t_0} \otimes \st_{t-t_0}(\pi_v \{ -t_0\frac{g-1}{2} \})$, 
rappelons, cf. (\ref{eq-torsion}), que 
$$\ind_{P_{(t-t_0)g,(t-t_0+1)g}(F_v)}^{GL_{(t-t_0+1)g}(F_v)} \st_{t-t_0}(\pi_v \{ \frac{g-1}{2} \}) \otimes
\pi_v \{ -(t-t_0) \frac{g-1}{2} \} \twoheadrightarrow \st_{t-t_0+1}(\pi_v).$$

\begin{proof}
Comme 
\begin{multline*}
\lexp p j^{=tg}_{\overline{1_{h_0}} \setminus \overline{1_{h_0+1}},!*} 
j^{=tg,*}_{\overline{1_{h_0}} \setminus \overline{1_{h_0+1}}}  \bigl ( \lexp p j^{=tg}_{\overline{1_{h_0}}!*} 
HT_{\overline{1_{h_0}}}(\pi_v,,\Pi_{t_0} \otimes \st_{t-t_0}(\pi_v \{ -t_0\frac{g-1}{2} \})) \bigr ) \simeq \\
\lexp p j^{=tg}_{\overline{1_{h_0}} \setminus \overline{1_{h_0+1}},!*} 
HT_{\overline{1_{h_0}} \setminus \overline{1_{h_0+1}}}(\pi_v,,\Pi_{t_0} \otimes 
\st_{t-t_0}(\pi_v \{ -t_0\frac{g-1}{2} \})),
\end{multline*}
il s'agit simplement d'identifier 
\addtocounter{smfthm}{1}
\begin{equation} \label{eq-terme2}
i^{tg+1}_{\overline{1_{h_0+1}},*} \lexp p \hi^{-1} i^{tg+1,*}_{\overline{1_{h_0+1}}} 
\bigl ( \lexp p j^{=tg}_{\overline{1_{h_0}}!*} HT_{\overline{1_{h_0}}}(\pi_v,,\Pi_{t_0} \otimes 
\st_{t-t_0}(\pi_v \{ -t_0\frac{g-1}{2} \})) \bigr )
\end{equation} 
avec le premier terme de la suite exacte courte de l'ÈnoncÈ. 
La preuve se dÈroule en deux temps:
\begin{itemize}
\item[(i)] on construit tout d'abord une surjection du faisceau pervers $P$ de (\ref{eq-terme2}) 
vers le faisceau pervers $A$ de l'ÈnoncÈ

\item[(ii)] et on vÈrifie dans un deuxiËme temps que les faisceaux de cohomologie de
ces deux faisceaux pervers possËdent les mÍmes germes en tout point gÈomÈtrique.
\end{itemize}
Notons alors $K$ le noyau de cette surjection $P \twoheadrightarrow A$ et soit pour 
$i_z:\overline \{z \} \hookrightarrow X_{\IC,\bar s_v}$ un point
gÈomÈtrique, le foncteur $\hi^*_z$ qui ‡ un faisceau pervers $C$ associe, dans le groupe de Grothendieck,
la somme alternÈe $\sum_{i \in \Zm} (-1)^i \hi^i i_z^* C$ des germes de ses faisceaux de cohomologie.
On a ainsi $\hi^*_z(K)=\hi^*_z(P)-\hi^*_z(A)$ qui d'aprËs (ii) est nul, ce qui
implique que $K$ est nul puisque l'image d'un faisceau pervers $C$ dans le groupe de Grothendieck est 
dÈterminÈe par ses $\hi^*_z C$  lorsque $z$ dÈcrit tous les points gÈomÈtriques, 
cf. par exemple la section 7 de \cite{boyer-invent2}.

\medskip

(i) Pour le premier point notons tout d'abord que d'aprËs (\ref{eq00}), on a\footnote{cf. la remarque
prÈcÈdente pour calculer les dÈcalages}
\begin{multline*}
j^{=tg}_{\overline{1_{h_0}},!} HT_{\overline{1_{h_0}}}(\pi_v,\Pi_{t_0} \otimes 
\st_{t-t_0}(\pi_v \{ -t_0\frac{g-1}{2} \}))=
\sum_{i=t}^s  \lexp p j^{=ig}_{\overline{1_{h_0}},!*} HT_{\overline{1_{h_0}}} \Bigl ( \pi_v, \\
\Pi_{t_0} \{ (i-t) \frac{g-1}{2} \} \otimes \bigl ( \st_{t-t_0} (\pi_v\{ -\frac{i-t}{2} \}) \times 
\st_{i-t}(\pi_v \{ \frac{t-t_0}{2} \} ) \bigr )  \{ -t_0\frac{g-1}{2} \} \Bigr ) (\frac{i-t}{2}).$$
\end{multline*}
En utilisant la filtration par les poids et la surjection 
$$\st_{t-t_0} (\pi_v\{-\frac{i-t}{2} \}) \times \st_{i-t}(\pi_v\{ \frac{t-t_0}{2} \}) \twoheadrightarrow \st_{i-t_0}(\pi_v),$$
on obtient une surjection
\begin{multline} \label{eq-noyau}
\lexp p \hi^{-1} i^{tg+1,*}_{\overline{1_{h_0}}} \bigl (  \lexp p j^{=tg}_{\overline{1_{h_0}},!*} 
HT_{\overline{1_{h_0}}}(\pi_v,,\Pi_{t_0} \otimes \st_{t-t_0}(\pi_v \{ -t_0\frac{g-1}{2} \}) ) \bigr ) 
\twoheadrightarrow \\
\lexp p j^{=(t+1)g}_{\overline{1_{h_0}},!*} HT_{\overline{1_{h_0}}}(\pi_v, ,\Pi_{t_0} \{ \frac{g-1}{2} \}
\otimes \st_{t+1-t_0}(\pi_v \{ -t_0\frac{g-1}{2} \})) (\frac{1}{2})
\end{multline}
ce qui, aprËs par application du foncteur
$\lexp p i^{h+1 \leq +0,*}_{\overline{1_{h_0}},\overline{1_{h_0+1}}}$, fournit la surjection cherchÈe.
En effet pour le membre de droite, on a
$\lexp p i^{h+1 \leq +0,*}_{\overline{1_{h_0}},\overline{1_{h_0+1}}} \circ
\lexp p j^{=(t+1)g}_{\overline{1_{h_0}},!*}  \simeq \lexp p j^{=(t+1)g}_{\overline{1_{h_0+1}},!*}$,
alors que pour le membre de gauche, cf. la preuve du lemme \ref{lem-HT1}, on a
$$\lexp p i^{tg+1 \leq +0,*}_{\overline{1_{h_0}},\overline{1_{h_0+1}}}  \circ
\lexp p \hi^{-1} i^{tg+1,*}_{\overline{1_{h_0}}} \circ  \lexp p j^{=tg}_{\overline{1_{h_0}},!*}  \simeq
\lexp p \hi^{-1} i^{tg+1,*}_{\overline{1_{h_0+1}}} \circ \lexp p j^{=tg}_{\overline{1_{h_0}}!*}.$$

\noindent (ii) Les germes des faisceaux de cohomologie de
$$A:=\lexp p j^{=(t+1)g}_{\overline{1_{h_0+1}},!*} HT_{\overline{1_{h_0+1}}}
\bigr (\pi_v, \Pi_{t_0} \{ \frac{g-1}{2} \} \otimes \st_{t+1-t_0}
(\pi_v \{ -t_0\frac{g-1}{2} \})_{P_{1,(t+1-t_0)g}(F_v)} \bigl ) (\frac{1}{2})$$
sont donnÈs par le corollaire prÈcÈdent de mÍme que ceux de 
$$B:=\lexp p j^{=tg}_{\overline{1_{h_0}} \setminus \overline{1_{h_0+1}},!*} 
HT_{\overline{1_{h_0}} \setminus \overline{1_{h_0+1}}} \bigl ( \pi_v,\Pi_{t_0} \otimes 
\st_{t-t_0}(\pi_v \{ -t_0\frac{g-1}{2} \}) \bigr ).$$
Il s'agit alors de vÈrifier, qu'‡ dÈcalage d'un indice prËs, pour tout point gÈomÈtrique $z$ de 
$X^{\geq h+1}_{\IC,\bar s_v,\overline{1_{h_0+1}}}$, on obtient la mÍme chose. Pour ce qui concerne
les conditions d'annulation, on vÈrifie aisÈment qu'elles sont les mÍmes. 
Soit alors $z$ un point gÈomÈtrique de $X^{=(t+\delta)g}_{\IC,\bar s_v,\overline{1_{(t+\delta)g}}}$,
d'aprËs le corollaire prÈcÈdent.
\begin{itemize}
\item  La fibre en $z$ du faisceau de cohomologie d'indice 
$(t+\delta)g-d-\delta$ de $B$ se calcule par induction ‡ partir de celle de
$\lexp p j^{=tg}_{a,!*} HT_a \bigl ( \pi_v,\Pi_{t_0} \otimes \st_{t-t_0}(\pi_v \{ -t_0\frac{g-1}{2} \}) \bigr )$ 
o˘ $a$ est le sous-espace vectoriel de $F_v^d$ engendrÈ par les vecteurs 
$e_1,\cdots,e_{h_0},e_{h_0+2},\cdots,e_{h+1}$ de la base canonique. D'aprËs le corollaire
prÈcÈdent, la fibre en $z$ de $\hi^{(t+\delta)g-d-\delta} \lexp p j^{=tg}_{a,!*} 
HT_a \bigl ( \pi_v,\Pi_{t_0} \otimes \st_{t-t_0}(\pi_v \{ -t_0\frac{g-1}{2} \}) \bigr )$ est isomorphe ‡
$$\Pi_{t_0} \{ \delta \frac{g-1}{2} \} \otimes \Bigl ( \st_{t-t_0}(\pi_v \{ \delta \frac{g-1}{2} \} \otimes 
\bigl ( \speh_\delta(\pi_v \{ -(t-t_0) \frac{g-1}{2} \}) \bigr )_{|P_{1,\delta g}(F_v)} \Bigr )  
\{ -t_0\frac{g-1}{2} \}$$
relativement au Levi
$$\left ( \begin{array}{cccc} GL_{t_0g} & 0 & 0 & 0 \\ 0 & 1 & 0 & 0 \\ 0 & 0 & GL_{(t-t_0)g} & 0 \\
0 & 0 & 0 & GL_{\delta g-1} \end{array} \right )$$
de $GL_{(t+\delta)g}$.
En induisant ‡ $P_{tg_0,t_0g+1,(t+\delta)g}(F_v)$, on obtient que la fibre en $z$ de 
$\hi^{(t+\delta)g-d-\delta} B$ est isomorphe ‡
$$\Pi_{t_0} \{ \delta \frac{g-1}{2} \} \otimes \Bigl ( \st_{t-t_0}(\pi_v \{ -\frac{\delta}{2} \} ) \times \bigl (
\speh_{\delta}(\pi_v \{ \frac{t-t_0}{2} \} ) \bigr )_{|P_{1,\delta g}(F_v)} \Bigr ) \{ -t_0 \frac{g-1}{2} \}$$
et donc d'aprËs le deuxiËme isomorphisme du lemme \ref{lem-mirabolique}, ‡
$$\Pi_{t_0} \{ \delta \frac{g-1}{2} \} \otimes LT_{\pi_v}(t-t_0,\delta-1)_{|P_{1,(t-t_0+\delta)g}(F_v)} 
\{ -t_0 \frac{g-1}{2} \}.$$

\item La fibre en $z$ de $\hi^{(t+1+\delta-1)g-d-\delta+1} A$ est d'aprËs le corollaire prÈcÈdent
isomorphe ‡
$$\Pi_{t_0} \{ \delta \frac{g-1}{2} \} \otimes \Bigl ( \bigl ( \st_{t-t_0+1}(\pi_v \{ -\frac{\delta-1}{2} \}
) \bigr )_{P_{1,(t+1-t_0)g}(F_v)} \times \speh_{\delta-1}(\pi_v \{ \frac{t-t_0+1}{2} \}) \Bigr ) 
\{ -t_0 \frac{g-1}{2} \}$$
lequel d'aprËs le premier isomorphisme du lemme \ref{lem-mirabolique} est isomorphe ‡
$$\Pi_{t_0} \{ \delta \frac{g-1}{2} \} \otimes  LT_{\pi_v}(t-t_0,\delta-1)_{|P_{1,(t-t_0+\delta)g}(F_v)} 
\{ -t_0 \frac{g-1}{2} \}.$$
\end{itemize}
Ainsi puisque la fibre en $z$ de $\hi^i B$ est isomorphe ‡ celle de
$$\hi^{i+1} \Bigl ( \lexp p \hi^{-1} i^{tg+1,*}_{\overline{1_{h_0}}} \bigl (  \lexp p j^{=tg}_{\overline{1_{h_0}},!*} 
HT_{\overline{1_{h_0}}}(\pi_v,,\Pi_{t_0} \otimes \st_{t-t_0}(\pi_v \{ -t_0\frac{g-1}{2} \}) ) \bigr )  \Bigr ),$$
on en dÈduit donc que le noyau de la surjection (\ref{eq-noyau}) est un faisceau pervers dont les 
fibres en $z$
de ses faisceaux de cohomologie sont nulles. Par symÈtrie, le rÈsultat est valable pour tout
point gÈomÈtrique de $X^{\geq h+1}_{\IC,\bar s_v,\overline{1_{h_0+1}}}$ de sorte que ce noyau est nul,
et la surjection (\ref{eq-noyau}) est en fait un isomorphisme, d'o˘ le rÈsultat.
\end{proof}

\begin{proof} \textit{de la proposition \ref{prop-fond1}.}
On raisonne par rÈcurrence sur $t$ de $s$ ‡ $1$. Le cas $t=s$ Ètant trivial puisque 
$P(\pi_v,\Pi_s)$ est nul, on suppose le rÈsultat acquis jusqu'au rang $t+1$.
Partons de l'ÈgalitÈ de \cite{boyer-invent2} 4.5.1 rappelÈe plus haut en (\ref{eq00}):
\addtocounter{smfthm}{1}
\begin{equation} \label{eq-P}
P(\pi_v,\Pi_t) =  \sum_{i=t+1}^{s} \lexp p j^{=ig}_{\overline{1_h},!*} HT_{\overline{1_h}} \Bigl (
\pi_v,\Pi_t \{ (i-t) \frac{g-1}{2} \} \otimes \st_{i-t}(\pi_v \{ -t \frac{g-1}{2} \} ) \Bigr ) (\frac{i-t}{2}).
\end{equation}
On peut alors appliquer, en utilisant l'hypothËse de rÈcurrence pour $i \geq t+1$, 
le corollaire prÈcÈdent et calculer, en utilisant l'exactitude du foncteur
$j^{=tg}_{\overline{1_{h}} \setminus \overline{1_{h+1}},!} 
j^{=tg,*}_{\overline{1_{h}} \setminus \overline{1_{h+1}}}$,
l'image, dans le groupe de Grothendieck des faisceaux pervers munis d'une action par correspondances
de $G(\Am^{\oo,v}) \times P_{h,h+1,d}(F_v) \times \Zm$, de $P(\pi_v,\Pi_t)$ par ce foncteur, soit en notant
que pour tout $i \geq t$, on a trivialement
$( j^{=tg}_{\overline{1_{h}} \setminus \overline{1_{h+1}},!} 
j^{=tg,*}_{\overline{1_{h}} \setminus \overline{1_{h+1}}} ) \circ \lexp p j^{=ig}_{\overline{1_h},!*}
= (j^{=ig}_{\overline{1_{h}} \setminus \overline{1_{h+1}},!} 
j^{=ig,*}_{\overline{1_{h}} \setminus \overline{1_{h+1}}}) \circ \lexp p j^{=ig}_{\overline{1_h},!*}$
\begin{multline*}
j^{=tg}_{\overline{1_{h}} \setminus \overline{1_{h+1}},!}
j^{=tg,*}_{\overline{1_{h}} \setminus \overline{1_{h+1}}} P(\pi_v,\Pi_t) =  \sum_{i=t+1}^{s-1} \\
 \lexp p j^{=ig}_{\overline{1_{h}} \setminus \overline{1_{h+1}},!*}
HT_{\overline{1_{h}} \setminus \overline{1_{h+1}}} (\pi_v,\Pi_{t} \{ (i-t) \frac{g-1}{2} \} \otimes
\st_{i-t}(\pi_v \{ -t \frac{g-1}{2} \})_{|P_{1,(i-t)g}(F_v)}) (\frac{i-t}{2}) + \\
\lexp p j^{=(i+1)g}_{\overline{1_{h+1}},!*}
HT_{\overline{1_{h+1}}} (\pi_v,\Pi_t \{ (i+1-t)\frac{g-1}{2} \} \otimes
\st_{i+1-t}(\pi_v \{  -t \frac{g-1}{2} \} )_{|P_{1,(i-t+1)g}(F_v)}) (\frac{i+1-t}{2}).
\end{multline*}
En regroupant les termes dans l'Èquation prÈcÈdente, on obtient alors
\begin{multline*}
j^{=tg}_{\overline{1_{h}} \setminus \overline{1_{h+1}},!}
j^{=tg,*}_{\overline{1_{h}} \setminus \overline{1_{h+1}}} P(\pi_v,\Pi_t) = 
 \lexp p j^{=(t+1)g}_{\overline{1_{h}} \setminus \overline{1_{h+1}},!*}
HT_{\overline{1_{h}} \setminus \overline{1_{h+1}}} (\pi_v,\Pi_{t} \{ \frac{g-1}{2} \} \otimes
(\pi_v \{ -t \frac{g-1}{2} \})_{|P_{1,g}(F_v)}) (\frac{1}{2}) + \\
 \sum_{i=t+2}^{s}  \lexp p j^{=ig}_{\overline{1_h},!*} HT_{\overline{1_h}} \Bigl (
\pi_v,\Pi_t \{ (i-t) \frac{g-1}{2} \} \otimes \st_{i-t}(\pi_v \{ -t \frac{g-1}{2} \} )_{|P_{1,(i-t)g}(F_v)} \Bigr ) 
(\frac{i-t}{2}).
\end{multline*}
Ainsi en soustrayant ‡ l'ÈgalitÈ (\ref{eq-P}) l'ÈgalitÈ ci-avant, on obtient d'aprËs (\ref{eq-sec1}), 
que l'image de $i^{h+1}_{\overline{1_{h+1}},*} \lexp p i^{h+1,*}_{\overline{1_{h+1}}} P(\pi_v,\Pi_t)$
dans le groupe de Grothendieck des faisceaux pervers munis d'une action par correspondances
de $G(\Am^{\oo,v}) \times P_{h,h+1,d}(F_v) \times \Zm$, est Ègale ‡ 
\begin{multline*}
\lexp p j^{=(t+1)g}_{\overline{1_h},!*} HT_{\overline{1_h}} \Bigl (
\pi_v,\Pi_t \{ \frac{g-1}{2} \} \otimes (\pi_v \{ -t \frac{g-1}{2} \})_{|P_{1,g}(F_v)}  \Bigr ) (\frac{1}{2})  \\ -
 \lexp p j^{=(t+1)g}_{\overline{1_{h}} \setminus \overline{1_{h+1}},!*}
HT_{\overline{1_{h}} \setminus \overline{1_{h+1}}} (\pi_v,\Pi_{t} \{ \frac{g-1}{2} \} \otimes
(\pi_v \{ -t \frac{g-1}{2} \}))_{|P_{1,g}(F_v)} (\frac{1}{2}) \\
= \lexp p j^{=(t+1)g}_{\overline{1_{h+1}},!*} HT_{\overline{1_h}} \Bigl (
\pi_v,\Pi_t \{ \frac{g-1}{2} \} \otimes (\pi_v \{ -t \frac{g-1}{2} \})_{|P_{1,g}(F_v)}  \Bigr ) (\frac{1}{2}).
\end{multline*}
On a alors le diagramme suivant
$$\xymatrix{
& P(\pi_v,\Pi_t) \ar@{->>}[r] & i^{h+1}_{\overline{1_{h+1}},*} \lexp p i^{h+1,*}_{\overline{1_{h+1}}} P(\pi_v,\Pi_t) 
\\
K \ar@{^{(}->}[r] \ar@{-->}[urr]^0 & P(\pi_v,\Pi_t) \ar@{=}[u] \ar@{->>}[r] & 
\lexp p j^{=(t+1)g}_{\overline{1_{tg+1}},!*} 
HT_{\overline{1_{tg+1}}} (\pi_v,\Pi_t\{ \frac{g-1}{2} \} \otimes (\pi_v)_{|P_{1,g}(F_v)} \{ -t \frac{g-1}{2} \} ) 
(\frac{1}{2}) \ar@{-->}[u]^\simeq
}$$
o˘ la flËche composÈe $K \longrightarrow i^{h+1}_{\overline{1_{h+1}},*} \lexp p i^{h+1,*}_{\overline{1_{h+1}}} 
P(\pi_v,\Pi_t)$ est nÈcessairement nulle. En effet
\begin{itemize}
\item d'un cÙtÈ l'image d'un faisceau pervers dans le groupe de Grothendieck des faisceaux pervers,
est uniquement dÈterminÈe par les fibres de la somme alternÈe de ses faisceaux de cohomologie, de
sorte que les constituants simples de $i^{h+1}_{\overline{1_{h+1}},*} \lexp p i^{h+1,*}_{\overline{1_{h+1}}} 
P(\pi_v,\Pi_t)$ sont des extensions intermÈdiaires de systËmes locaux sur $X^{=(t+1)g}_{\IC,\bar s_v,
\overline{1_{tg+1}}}$,

\item de l'autre les facteurs simples de $K$
sont ‡ support soit dans $X^{\geq h+2}_{\IC,\bar s_v}$ soit des extensions intermÈdaires
de systËmes locaux sur $X^{= (t+1)g}_{\IC,\bar s_v, \overline{1_{tg}} \setminus \overline{1_{tg+1}}}$.
\end{itemize}
Enfin on rÈcupËre l'action complËte de $P_{tg,d}(F_v)$ en utilisant comme 
prÈcÈdemment la remarque de la preuve du corollaire \ref{coro-hi}.
\end{proof}

\section{Faisceaux de cohomologie du complexe des cycles Èvanescents}

En ce qui concerne les fibres des faisceaux de cohomologie de $\Psi_{\IC}$, on peut
mener le mÍme raisonnement. Rappelons qu'en notant
$$\bar j: X_{\IC,\bar \eta_v} \hookrightarrow X_{\IC} \hookleftarrow X_{\IC,\bar s_v}: i,$$
o˘ $\bar j_!=\lexp p {\bar j_!}$ et  $\bar j_*=\lexp p {\bar j_*}$, le complexe des
cycles Èvanescents est $\lexp p \hi^{-1} i^* \bar j_* \bar \Qm_l$. Ainsi en utilisant que l'inclusion
$$\bar j^{\geq 1}_{\neq \overline 1_1}:X_{\IC,\bar \eta}-X^{\geq 1}_{\IC,\bar s_v,\overline 1_1} \hookrightarrow
X_{\IC,\bar \eta}$$
est affine, exactement les mÍmes arguments que ceux du lemme \ref{lem-HT1} montrent que
$i^{1,*}_{\overline 1_1} \Psi_{\IC}$ 
est pervers, ce qui donne la suite exacte courte
\addtocounter{smfthm}{1}
\begin{equation} \label{eq-sec-psi}
0 \rightarrow j^{\geq 1}_{\neq \overline 1_1,!} j^{\geq 1,*}_{\neq \overline 1_1} \Psi_{\IC} 
\longrightarrow \Psi_{\IC} \longrightarrow i^1_{\overline 1_1,*} 
\lexp p \hi^0 i^{1,*}_{\overline 1_1} \Psi_{\IC} \rightarrow 0.
\end{equation}
D'aprËs \cite{boyer-invent2}, l'image de $\Psi_{\IC}$ dans
le groupe de Grothendieck est dÈterminÈe via la formule des traces.
Afin d'exprimer le rÈsultat, il est tout d'abord plus simple de dÈcomposer
$\Psi_{\IC}$ comme dans \cite{boyer-invent2} sous la forme
$$\Psi_{\IC} \simeq \bigoplus_{\pi_v} \Psi_{\IC,\pi_v}$$
o˘ $\pi_v$ dÈcrit les classes d'Èquivalence inertielle des reprÈsentations irrÈductibles cuspidales
de $GL_g(F_v)$ pour $1 \leq g \leq d$. En posant 
$s=\lfloor \frac{d}{g} \rfloor$, dans le groupe de Grothendieck adÈquat, on a alors, 
cf. \cite{boyer-invent2} proposition 4.1.4 ou corollaire 5.4.2
\addtocounter{smfthm}{1}
\begin{equation} \label{eq-psi-groth}
\Psi_{\IC,\pi_v}= \sum_{k=1-s}^{s-1} \sum_{\atop{|k| < t \leq s}{t \equiv k-1 \mod 2}}
\PC(t,\pi_v)(-\frac{k}{2}).
\end{equation}

\begin{prop} \label{prop-psi-fil} (cf. aussi la proposition B.3.4 de \cite{boyer-duke}) \\
Avec les notations prÈcÈdentes, on a
$$i^1_{\overline 1_1,*} \lexp p \hi^0 i^{1,*}_{\overline 1_1} \Psi_{\IC,\pi_v} =
\sum_{k=1}^{s} 
\lexp p j^{=kg}_{\overline{1_1},!*} HT_{\overline{1_1}} (\pi_v,\st_k(\pi_v)_{|P_{1,kg}(F_v)}) 
(\frac{1-k}{2}).$$
\end{prop}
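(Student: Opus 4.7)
The plan is to mirror the method just developed for Harris-Taylor perverse sheaves. Since the cokernel in the short exact sequence (\ref{eq-sec-psi}) is perverse, identifying $i^1_{\overline 1_1,*} \lexp p \hi^0 i^{1,*}_{\overline 1_1} \Psi_{\IC,\pi_v}$ amounts to computing the image
$$j^{\geq 1}_{\neq \overline 1_1,!} j^{\geq 1,*}_{\neq \overline 1_1} \Psi_{\IC,\pi_v}$$
in the Grothendieck group of equivariant perverse sheaves and subtracting it from $\Psi_{\IC,\pi_v}$, as given by (\ref{eq-psi-groth}).

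First I would apply the exact functor $j^{\geq 1}_{\neq \overline 1_1,!} j^{\geq 1,*}_{\neq \overline 1_1}$ term by term to the decomposition $\Psi_{\IC,\pi_v} = \sum_{k,t} \PC(t,\pi_v)(-\frac{k}{2})$ of (\ref{eq-psi-groth}). For each summand, the same mechanism as in corollary \ref{coro-j} applied in the boundary case $h_0=0$ — the whole variety playing the role of $X^{\geq h_0}_{\IC,\bar s_v,\overline{1_{h_0}}}$ — produces a short exact sequence of perverse sheaves whose open quotient is $\lexp p j^{=tg}_{\overline{1_1}\setminus\overline{1_2},!*} HT_{\overline{1_1}\setminus\overline{1_2}}(\pi_v,\st_t(\pi_v))(-\frac{k}{2})$, and whose closed piece is supported on $\overline{1_2}$ with representation-theoretic content dictated by the first isomorphism of lemma \ref{lem-mirabolique} and carrying an extra Tate twist $(\frac{1}{2})$.

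Second, I would organize the telescoping across the double sum in (\ref{eq-psi-groth}). The closed contribution produced on $\overline{1_2}$ by the pair $(k,t)$ is expected to match exactly the closed contribution on $\overline{1_1}\setminus\overline{1_2}$ arising from the pair $(k-1,t+1)$, because the Tate twist $(\frac{1}{2})$ turns $(-\frac{k}{2})$ into $(-\frac{k-1}{2})$, and because the restricted induced Steinberg parameter increments by one. After cancellation only the extremal layer survives, indexed by $k=1,\ldots,s$, in which the remaining $P_{1,kg}(F_v)$-representation is precisely the mirabolic restriction $\st_k(\pi_v)_{|P_{1,kg}(F_v)}$ predicted by the first isomorphism of lemma \ref{lem-mirabolique}.

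The main obstacle will be the combinatorial bookkeeping to make the telescoping transparent: each intermediate summand must appear exactly once as a \emph{produced} piece on $\overline{1_2}$ and exactly once as a \emph{consumed} piece on $\overline{1_1}\setminus\overline{1_2}$, with matching Tate twists and mirabolic parameters, the non-trivial parity constraint $t\equiv k-1\pmod{2}$ in (\ref{eq-psi-groth}) being crucial to line up these pairings. A minor secondary point is to upgrade the resulting Grothendieck-group equality to an isomorphism of perverse sheaves; this is automatic, since the sum on the right is a direct sum of mutually non-isomorphic simple equivariant perverse sheaves (distinguished by their supports and by their generic fibers), and a semisimple perverse sheaf is determined by its class in the Grothendieck group.
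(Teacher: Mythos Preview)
Your strategy is exactly the paper's: use the short exact sequence (\ref{eq-sec-psi}), compute $j^{\geq 1}_{\neq \overline 1_1,!} j^{\geq 1,*}_{\neq \overline 1_1}\Psi_{\IC,\pi_v}$ in the Grothendieck group by applying corollary~\ref{coro-j} in the degenerate case $h_0=0$ (with the convention $X^{=0}_{\IC,\bar s_v,\overline{1_0}}:=X_{\IC,\bar\eta_v}$), and subtract from (\ref{eq-psi-groth}).

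Two corrections are in order. First, your stratum indices are shifted by one throughout: with $h_0=0$ the quotient in corollary~\ref{coro-j} lives on $\overline{1_0}\setminus\overline{1_1}$, i.e.\ on $X^{\geq tg}_{\neq\overline{1_1}}$, and the closed subobject is supported on $\overline{1_1}$, not on $\overline{1_2}$. With this correction the bookkeeping is simpler than you describe: the restrictions to $\neq\overline{1_1}$ cancel tautologically (this is built into the short exact sequence (\ref{eq-sec-psi})), and only the pieces on $\overline{1_1}$ survive; the parity constraint then pairs the $\overline{1_1}$-term produced by $(k,t)$ with the original $(k-1,t+1)$ term of (\ref{eq-psi-groth}).

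Second, your last paragraph overshoots. The proposition is stated and used as an equality in the Grothendieck group; the paper does not claim a direct-sum decomposition, and your semisimplicity argument is not available since one does not know a priori that $i^1_{\overline 1_1,*}\,{}^p\hi^0 i^{1,*}_{\overline 1_1}\Psi_{\IC,\pi_v}$ is semisimple. In the subsequent corollary the paper instead invokes the weight filtration, whose graded pieces are read off from this Grothendieck-group identity.
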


\begin{proof}
Les arguments sont similaires ‡ ceux de la proposition \ref{prop-fond1} en utilisant le corollaire
\ref{coro-j} avec $h_0+1=1$ en convenant que 
$$X^{=0}_{\IC,\bar s_v,\overline{1_0}}:=X_{\IC,\bar \eta_v}.$$ 
On applique ainsi le foncteur 
$j^{\geq 1}_{\neq \overline 1_1,!} j^{\geq 1,*}_{\neq \overline 1_1}$ ‡ l'ÈgalitÈ (\ref{eq-psi-groth}), ce
qui donne d'aprËs le corollaire \ref{coro-j}
\begin{multline*}
j^{\geq 1}_{\neq \overline 1_1,!} j^{\geq 1,*}_{\neq \overline 1_1} \Psi_{\IC,\pi_v} =
\sum_{k=1-s}^{s-1} \sum_{\atop{|k| < t < s}{t \equiv k-1 \mod 2}} \Bigl (
\lexp p j^{=tg}_{\neq \overline{1_1},!*} HT_{\neq \overline{1_1}}(\pi_v,\st_t(\pi_v)_{|P_{1,tg}(F_v)}) 
\otimes \Lm(\pi_v) (-\frac{k}{2}) \\
+ \lexp p j^{=(t+1)g}_{\overline{1_1},!*} HT_{\overline{1_1}}(\pi_v,\st_{t+1}(\pi_v)_{|P_{1,(t+1)g}(F_v)})
\otimes \Lm(\pi_v) (- \frac{k-1}{2}) \Bigr ) \\
+ \sum_{\atop{k \equiv s-1 \mod 2}{|k| \leq s-1}} \PC(s,\pi_v) (-\frac{k}{2})
\end{multline*}
On soustrait alors cette ÈgalitÈ ‡ (\ref{eq-psi-groth}), ce qui d'aprËs la suite exacte courte (\ref{eq-sec-psi})
fournit le rÈsultat.
\end{proof}

\begin{coro} \label{coro-fin}
Soient $\pi_v$ une reprÈsentation irrÈductible cuspidale de $GL_g(F_v)$ et
$z$ un point gÈomÈtrique de $X^{=d}_{\IC,\bar s_v}$. La fibre en $z$ de
$\hi^i \Psi_{\IC,\pi_v}$ vÈrifie alors les points suivant:
\begin{itemize}
\item elle est nulle si $g$ ne divise pas $d$;

\item si $d=sg$, elle est nulle si $i>0$ ou si $i \leq -s$;

\item pour $d=sg$ et $1-s \leq i \leq 0$, elle est isomorphe ‡ la fibre en $z$ de
$HT(\pi_v,LT_{\pi_v}(s-1+i,-i)) \otimes \Lm(\pi_v) (\frac{i}{2})$.
\end{itemize}
\end{coro}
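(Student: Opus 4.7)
The plan is to combine the short exact sequence (\ref{eq-sec-psi}) with Proposition \ref{prop-psi-fil} and Corollary \ref{coro-hi}, then use the mirabolic isomorphism of Lemma \ref{lem-mirabolique} to identify the final answer. The key input, beyond the results already established, is a simple geometric observation about supersingular points.

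The first step I would carry out is the observation that any supersingular point $z \in X^{=d}_{\IC,\bar s_v}$ lies in $X^{\geq 1}_{\IC,\bar s_v,\overline{1_1}}$. Indeed, at such a point the Barsotti--Tate group is entirely connected of rank $d$, so the first vector $\iota(e_1)$ of any Drinfeld basis lies automatically in the connected part. Consequently, the left-hand term of (\ref{eq-sec-psi}), supported on the complement of $X^{\geq 1}_{\IC,\bar s_v,\overline{1_1}}$, has a vanishing stalk at $z$, and the long exact sequence of perverse cohomology sheaves yields
$$\hi^i \Psi_{\IC,\pi_v}|_z \simeq \hi^i\bigl(i^1_{\overline{1_1},*} \lexp p \hi^0 i^{1,*}_{\overline{1_1}} \Psi_{\IC,\pi_v}\bigr)\bigr|_z.$$

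Next, I would substitute the explicit expression from Proposition \ref{prop-psi-fil}, writing the right-hand side as a sum over $k=1,\ldots,s$ of the induced intermediate extensions $\lexp p j^{=kg}_{\overline{1_1},!*} HT_{\overline{1_1}}(\pi_v,\st_k(\pi_v)_{|P_{1,kg}(F_v)})(\tfrac{1-k}{2})$. Applying Corollary \ref{coro-hi}, the stalk at $z$ of the $k$-th summand is nonzero only when $(k+\delta)g = d$ for some $\delta\geq 0$: this forces $g\mid d$, giving the first bullet of the statement, and when $d = sg$ it forces the cohomological degree to equal $i = k-s$. Since the $s$ summands then contribute in the $s$ distinct degrees $\{1-s,2-s,\ldots,0\}$, the spectral sequence computing the stalks of $\hi^i \Psi_{\IC,\pi_v}$ from the filtration degenerates at $E_1$, which simultaneously yields the vanishing claims in the second bullet.

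The last step is to identify the stalk in degree $i = k-s$ with $HT(\pi_v,LT_{\pi_v}(s-1+i,-i))\otimes\LFF(\pi_v)(\tfrac{i}{2})$. By Corollary \ref{coro-hi}, and keeping track of the fact that Proposition \ref{prop-psi-fil} restricts only the $\st_k(\pi_v)$ factor to $P_{1,kg}(F_v)$, the stalk is governed by $\st_k(\pi_v\{-\tfrac{s-k}{2}\})_{|P_{1,kg}(F_v)} \times \speh_{s-k}(\pi_v\{\tfrac{k}{2}\})$. I would then apply the first isomorphism of Lemma \ref{lem-mirabolique} with $t = k$ and $s$ replaced by $s-k$, which identifies this restricted induced product with $LT_{\pi_v}(k-1,s-k)_{|P_{1,sg}(F_v)}$; after setting $i = k-s$ this is exactly $LT_{\pi_v}(s-1+i,-i)$, as required. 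The hard part will be the careful bookkeeping of the various parabolic inductions and restrictions, and in particular ensuring that the mirabolic identity is invoked in the precise form that produces $LT_{\pi_v}(s-1+i,-i)$ with the expected indices.
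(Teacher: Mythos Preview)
Your proposal is correct and follows essentially the same approach as the paper's proof: reduce via (\ref{eq-sec-psi}) to the perverse sheaf of Proposition~\ref{prop-psi-fil}, use its weight filtration and the associated spectral sequence, apply Corollary~\ref{coro-hi} to see that the nonzero $E_1$-terms at $z$ lie in distinct total degrees so the spectral sequence degenerates, and read off the result. Your explicit justification of why the first term of (\ref{eq-sec-psi}) has vanishing stalk at a supersingular point, and your explicit invocation of Lemma~\ref{lem-mirabolique} to pass from $\st_k{}_{|P_{1,kg}}\times\speh_{s-k}$ to $LT_{\pi_v}(k-1,s-k)$, spell out steps that the paper's proof leaves implicit.
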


\rem Le corollaire ci-dessus correspond au rÈsultat principal de \cite{boyer-invent2}, cf. le corollaire
2.2.10 de loc. cit. En utilisant le thÈorËme de comparaison de Berkovich, on en dÈduit
le calcul des groupes de cohomologie des espaces de Lubin-Tate, cf. le 
thÈorËme 2.3.5 de \cite{boyer-invent2}.

\begin{proof}
En utilisant la suite exacte courte (\ref{eq-sec-psi}), le germe cherchÈ est celui de 
$i^1_{\overline 1_1,*} \lexp p \hi^0 i^{1,*}_{\overline 1_1} \Psi_{\IC,\pi_v}$.
ConsidÈrons alors la filtration par les poids de ce dernier faisceau pervers
$$0=\Fil^0( \Psi_{\IC,\pi_v}) \subset \Fil^1( \Psi_{\IC,\pi_v}) \subset \cdots \subset \Fil^s( \Psi_{\IC,\pi_v}),$$
dont les graduÈs sont 
$$\gr^k( \Psi_{\IC,\pi_v}):=\Fil^k(\Psi_{\IC,\pi_v})/\Fil^{k-1}(\Psi_{\IC,\pi_v}) \simeq
\lexp p j^{=kg}_{\overline{1_1},!*} HT_{\overline{1_1}} (\pi_v,\st_k(\pi_v)_{|P_{1,kg}(F_v)}) (\frac{1-k}{2}),$$
pour $k=1,\cdots,s$. On considËre alors la suite spectrale calculant les faisceaux de cohomologie
de $\Psi_{\IC,\pi_v}$ ‡ partir de ceux de ses graduÈs, i.e. 
$$E_1^{i,j}=\hi^{i+j} \Bigl (
\lexp p j^{=-ig}_{\overline{1_1},!*} HT_{\overline{1_1}} (\pi_v,\st_{-i}(\pi_v)_{|P_{1,-ig}(F_v)}) 
(\frac{1+i}{2}) \Bigr ) \Rightarrow \hi^{i+j} \Psi_{\IC,\pi_v}.$$
D'aprËs le corollaire \ref{coro-hi}, la fibre en $z$ des $E_1^{i,j}$ vÈrifie alors les propriÈtÈs suivantes:
\begin{itemize}
\item tous les $(E_1^{i,j})_z$ sont nuls si  $i \geq 0$ ou si $i< -s$;

\item tous les $(E_1^{i,j})_z$ sont nuls si $g$ ne divise pas $d$;

\item pour $d=sg$, les $(E_1^{i,j})_z$ sont nuls si $i+j >0$ ou si $i+j \leq -s$.

\item Pour $d=sg$ et $1-s \leq i+j \leq 0$, les $(E_1^{i,j})_z$ sont nuls sauf pour $-i=s+i+j$, i.e. $j=-s-2i$
auquel cas cette fibre est isomorphe ‡ la fibre en $z$ de $HT(\pi_v,LT_{\pi_v}(s-1+i+j,-i_j)) \otimes \Lm(\pi_v) (\frac{i+j}{2})$.
\end{itemize}
Ainsi les couples $(i,j)$ tels que $(E_1^{i,j})_z$ est non nul sont les $(-s+\delta,s-2 \delta)$ pour 
$\delta=0,1,\cdots,s-1$. On remarque alors que toutes les flËches $d_r^{i,j}$ de cette suite spectrale 
sont nÈcessairement nuls puisque, par une rÈcurrence immÈdiate sur $r$, soit son espace de dÈpart $E_r^{i,j}$ 
soit son espace d'arrivÈe $E_r^{i+r,j+r-1}$ est nul. Ainsi donc on a $(E_\oo^{i,j})_z \simeq (E_1^{i,j})_z$
et le rÈsultat dÈcoule des tirets prÈcÈdents.
\end{proof}

\bibliographystyle{plain}
\bibliography{bib-ok}

\def\cftil#1{\ifmmode\setbox7\hbox{$\accent"5E#1$}\else
  \setbox7\hbox{\accent"5E#1}\penalty 10000\relax\fi\raise 1\ht7
  \hbox{\lower1.15ex\hbox to 1\wd7{\hss\accent"7E\hss}}\penalty 10000
  \hskip-1\wd7\penalty 10000\box7} \def\cprime{$'$}
\begin{thebibliography}{1}

\bibitem{zelevinski1}
I.N. Bernstein and A.V. Zelevinsky.
\newblock {Induced representations of reductive p-adic groups. I.}
\newblock 1977.

\bibitem{boyer-invent2}
P.~Boyer.
\newblock Monodromie du faisceau pervers des cycles \'evanescents de quelques
  vari\'et\'es de {S}himura simples.
\newblock {\em Invent. Math.}, 177(2):239--280, 2009.

\bibitem{boyer-compositio}
P.~Boyer.
\newblock Cohomologie des syst\`emes locaux de {H}arris-{T}aylor et
  applications.
\newblock {\em Compositio}, 146(2):367--403, 2010.

\bibitem{boyer-duke}
P.~Boyer.
\newblock La cohomologie des espaces de {L}ubin-{T}ate est libre.
\newblock {\em soumis}, 2013.

\bibitem{h-t}
M.~Harris, R.~Taylor.
\newblock {\em The geometry and cohomology of some simple {S}himura varieties},
  volume 151 of {\em Annals of Mathematics Studies}.
\newblock Princeton University Press, Princeton, NJ, 2001.

\bibitem{vigneras-livre}
M.-F. Vign{\'e}ras.
\newblock {\em Repr\'esentations {$l$}-modulaires d'un groupe r\'eductif
  {$p$}-adique avec {$l\ne p$}}, volume 137 of {\em Progress in Mathematics}.
\newblock Birkh\"auser Boston Inc., Boston, MA, 1996.

\bibitem{zelevinski2}
A.~V. Zelevinsky.
\newblock Induced representations of reductive {${p}$}-adic groups. {II}. {O}n
  irreducible representations of {${\rm GL}(n)$}.
\newblock {\em Ann. Sci. \'Ecole Norm. Sup. (4)}, 13(2):165--210, 1980.

\end{thebibliography}

\end{document}